\newcounter{contai}
\newtheorem{theorem}{Theorem}
\newtheorem{corollary}[theorem]{Corollary}
\newtheorem{definition}{Definition}
\newtheorem{lemma}[theorem]{Lemma}
\newtheorem{proposition}[theorem]{Proposition} 
\newtheorem{remark}{Remark} \setlength{\textwidth}{16cm}
\newcommand{\NN}{{\rm\bf N}}
\newcommand{\SL}{\text{SL}}
\newcommand{\RR}{{\rm\bf R}}
\newtheorem{maintheorem}{Theorem}
\begin{document}

\title[Dynamics of conservative Bykov cycles]{Dynamics of conservative Bykov cycles:\\ tangencies, generalized cocoon bifurcations\\ and elliptic solutions}

\keywords{Heteroclinic bifurcations, Tangencies, Generalized Cocoon bifurcations, Chirality, Elliptic solutions.}
\subjclass[2010]{Primary: 	37G25; Secondary: 34C23; 34C37; 37G20}

\author[M\'ario Bessa]{M\'ario Bessa}
\address[M. Bessa]{Departamento de Matem\'atica, Univ. da Beira Interior,\\
Rua Marqu\^es d'\'Avila e Bolama\\ 6201-001 Covilh\~a, Portugal\\}

\author[Alexandre Rodrigues]{Alexandre Rodrigues}
\address[A.A.P. Rodrigues]{Centro de Matem\'atica da Universidade do Porto\\ 
and Faculdade de Ci\^encias da Universidade do Porto\\
Rua do Campo Alegre 687, 4169--007 Porto, Portugal}

\email[M\'ario Bessa]{bessa@ubi.pt}
\email[Alexandre Rodrigues]{alexandre.rodrigues@fc.up.pt}

\thanks{CMUP is supported by the European Regional Development Fund through the programme COMPETE and by the Portuguese Government through the Funda\c{c}\~ao para a Ci\^encia e a Tecnologia (FCT) under the project PEst-C/MAT/UI0144/2011.  AR was supported by the grant SFRH/BPD/84709/2012 of FCT. Part of this work has been written during AR stay in Nizhny Novgorod University supported by the grant RNF 14-41-00044 }

\begin{abstract}
This paper presents a mechanism for the coexistence of hyperbolic and non-hy\-per\-bolic dynamics arising in a neighbourhood of a conservative Bykov cycle where trajectories turn in opposite directions near the two saddle-foci. We show that {within the class of divergence-free vector fields that preserve the cycle,} tangencies of the invariant manifolds of two hyperbolic saddle-foci densely occur. 
The global dynamics is persistently dominated by heteroclinic tangencies and by the existence of infinitely many elliptic points coexisting with suspended hyperbolic horseshoes. A generalized version of the Cocoon bifurcations for conservative systems is obtained.
\end{abstract}

\maketitle

\section{Introduction}\label{intro}

Homo and heteroclinic bifurcations constitute the core of our understanding of complicated recurrent behaviour in dynamical systems. The history goes back to Poincar\'e in the late $19^{th}$ century, with major subsequent contributions by the schools of Andronov, Shilnikov, Smale and Palis. These schools have been founded on a combination of analytical and geometrical tools which developed a quite good understanding of the qualitative behaviour of those dynamics.

Differential equations modelling physical experiments frequently have parameters which appear
in the differential equations, and it is known that qualitative changes
may occur in the solution structure of these systems as the parameters vary. The dynamical behaviour of systems can be strongly
influenced by special geometric or analytic invariant structures appearing in the equations (\emph{e.g.} divergence-free, preserving or reversing symmetries).
One may ask which dynamical behaviour we would expect to see in the presence of a given invariant 
structure. Generically, this is a hard question to answer, but some questions could be partially answered, by considering local and global bifurcations of low codimension. 

\subsection{The problem}
{A Bykov cycle on a three-dimensional manifold is a heteroclinic cycle between two hyperbolic saddle-foci of different Morse index, where one of the connections is transverse and the other is structurally unstable. There are two different possibilities for the geometry of the flow around the one-dimensional connection depending on the direction solutions turn around it. All literature about the Michelson system \cite{DIK1, DIK, DIKS, KE, KWZ, LR, LTW, Michelson, Webster} considered that in the neighbourhoods of the two saddle-foci, trajectories wind in the same direction. An immediate question arises:}
\begin{itemize}
\item[\textbf{(Q1)}]  {What happens if trajectories wind with opposite directions near each node? }
\end{itemize}

{The first author and Duarte proved in~\cite{BesDu07} that the set of $C^1$-divergence-free vector fields defined in a compact three-dimensional Riemannian manifold without boundary, has a $C^1$-residual set such that any vector field inside it is Anosov or else, the flow associated to it has dense elliptic solutions in the phase space. Furthermore, in \cite{BessaRocha}, also in this context, the authors proved that if the vector field is not Anosov, then it can be $C^1$-approximated by another divergence-free vector field exhibiting homoclinic tangencies. However, a conservative vector field whose flow has a persistent Bykov cycle may lie outside these residual/dense subsets and the developed theory cannot be applied for this degenerated class of systems. Since Bykov cycles cannot be Anosov (due to the presence of equilibria), the dichotomies in \cite{BessaRocha, BesDu07} suggest the following problem:}
\begin{itemize}
\item[\textbf{(Q2)}]  {Could we perform a $C^1$-perturbation within the set of vector fields whose flow has a Bykov cycle in such a way that the elliptic periodic solutions are dense and/or tangencies occur? }
\end{itemize}

{In this paper, we partially answer the questions \textbf{(Q1)}--\textbf{(Q2)}.} We present a mechanism for the coexistence of hyperbolic and non-hyperbolic dynamics arising in a neighbourhood of a conservative Bykov cycle where trajectories turn in opposite directions near the two saddle-foci - see Figure \ref{conservativo2}. We show that in a $C^1$-open class of divergence-free vector fields, tangencies of the invariant manifolds of two hyperbolic saddle-foci densely occur. We prove that the global dynamics is persistently dominated by heteroclinic tangencies and by the existence of infinitely many elliptic points coexisting with hyperbolic dynamics arising from $C^1$--transversality.
Tangencies are strongly connected with the Cocoon bifurcations. We describe an extended version of these bifurcations which can be detected numerically  with the \emph{Time Delay function} introduced by Lau \cite{Lau}. See also \cite{DIK, DIKS}. We also find the relations between these four dynamical phenomena: chirality, tangencies, generalized Cocoon bifurcations and elliptic solutions. In the conservative setting, Cocoon bifurcations are connected with unfoldings of a Hopf-zero singularity, which has been shown to occur in climatological models with seasonal forcing.

\begin{figure}\begin{center}
\includegraphics[height=3cm]{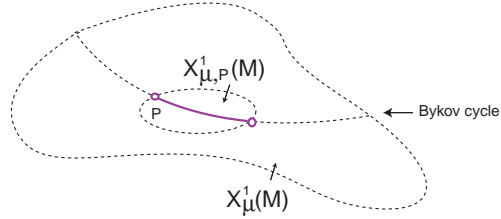}
\end{center}
\caption{ \small Scheme of the space of vector fields under consideration.  $\mathfrak{X}_{\mu}^1(M)$ represents the set of $C^1$-divergence free vector fields on $M$. On the dashed line, we find the vector fields whose flow has a given Bykov cycle and its continuation. The set $\mathfrak{X}_{\mu, P}^1(M)$ is the open set (with the induced topology) in which Theorem \ref{Main1} is valid. }
\label{scheme1}
\end{figure}

\subsection{Elliptic periodic solutions revisited}
We recall some important results about heteroclinic tangencies for general and conservative systems. The systematic study of bifurcations of tangencies was started by Gavrilov and Shilnikov \cite{GS1, GS2} in the seventies, for the case of two-dimensional dissipative diffeomorphisms. The authors established that diffeomorphisms with homoclinic tangencies might separate systems with regular and chaotic dynamics, \emph{i.e.} they belong to the boundary of Morse-Smale systems and the transition through this boundary corresponds to an $\Omega$--explosion. 

The theorem on the cascade of periodic sinks/sources near a tangency plays an important role -- see Newhouse  \cite{Newhouse2}:  in the dissipative case, for any one parameter family that unfolds generically a quadratic homoclinic tangency, there exists a sequence of intervals of values of the parameter such that the corresponding diffeomorphism has sinks/sources. One of the fundamental results in homoclinic bifurcations was established in Newhouse \cite{Newhouse2, Newhouse1}: the existence of regions of the space of two-dimensional diffeomorphisms where tangencies are dense. These regions are called \emph{Newhouse regions}; the result has been generalized for the multidimensional case by Gonchenko \cite{Gonc83}. Dynamics of systems within these regions is complex and rich and, as claimed in \cite{GST99}, it is impossible to give the complete description of bifurcations of such systems. 

The majority of these results was obtained for the broader case of general systems without any restriction on the preservation of some invariant structure. They cannot be directly applied to volume-preserving  and/or reversible systems because they require special considerations. Nevertheless, the main geometric and analytical arguments can be also used for systems with additional structures, having special care. In this direction, important results on the birth of elliptic periodic points in area-preserving maps under bifurcations of tangencies were obtained in \cite{Duarte, GS, Newhouse3}. See also the paper by Lamb and Stenkin \cite{LS2004} about \emph{mixed dynamics.}

Bifurcations of single-round periodic solutions were studied in \cite{MGonchenko, GS} for two-dimensional symplectic maps close to a map having a quadratic homoclinic tangency. The corresponding first return maps have been derived, bifurcations of their fixed points have been studied and bifurcation diagrams for one-parameter general unfoldings have been constructed. Using renormalization results, the existence of one-parameter cascades of elliptic single-round periodic solutions has been proved. 
\medbreak

\section{Preliminaries}
\subsection{Divergence-free vector fields}
Let $M$ be a three-dimensional closed and connected $C^\infty$ Riemaniann manifold without boundary, endowed with a volume-form and let $\mu$ denote the Lebesgue measure associated to it.  
 Let $F: M \rightarrow TM$ be a $C^r$  vector field, with $r\geq 1$. By Picard's theorem on the existence and uniqueness of solutions of differential equations, $F$ integrates into a complete $C^r$ flow $\varphi(t, x)\colon \mathbb{R} \times M\rightarrow M$ in a sense that $$\frac{d}{dt}\varphi(t, x)|_{t=s}=F(\varphi(s, x)).$$
We say that $F=(F_1, F_2, F_3)$ is \emph{divergence-free} if its divergence is equal to zero \emph{i.e.} if $$\nabla\cdot F:=\frac{\partial F_1}{\partial x_1}+\frac{\partial F_2}{\partial x_2}+\frac{\partial F_3}{\partial x_3}=0$$ computed in local coordinates $x=(x_1,x_2,x_3)\in M$, see \cite{Mo}. Equivalently, $F$ is divergence-free if, accordingly to the Liouville formula,  
the volume-measure $\mu$ is invariant for the associated flow. In this case we say that the flow is \emph{conservative}, \emph{volume-preserving} or \emph{incompressible} and is such that $\det D\varphi(t,x)=1$, for all $ x \in M$ and for all $ t \in \mathbb{R}$.
\medbreak

We denote by $\mathfrak{X}_\mu^r(M)$, $r\geq 1$, the space of $C^r$ divergence-free vector fields on $M$ and we endow this set with the usual $C^r$ Whitney topology. Let also denote by $\mathfrak{X}^r(M)\supset \mathfrak{X}_{\mu}^r(M)$ the space of $C^r$ general vector fields on $M$ without no divergence-free constraint. 
We say that an equilibrium $\sigma\in M$ of $F$  (\emph{i.e.} $F(\sigma)=\vec0$), is \emph{hyperbolic} if $DF(\sigma)$ has neither pure complex  nor zero as an eigenvalue. A hyperbolic saddle-equilibrium  with a pair of conjugated non-real eigenvalues is called a \emph{saddle-focus}. For regular points, (\emph{i.e.} non-equilibrium points) we define hyperbolicity with respect to the Poincar\'e map in a standard way -- see e.g. ~\cite{Katok}. Given a hyperbolic equilibrium point $p\in M$ with respect to a vector field $F$, we denote by $W^{u/s}(F,p)$ its unstable/stable global manifold. Local unstable/stable manifolds are denoted by $W^{u/s}_{loc}(F,p)$. When there is no ambiguity we omit the letter $F$ in these notations.

\begin{figure}\begin{center}
\includegraphics[height=7cm]{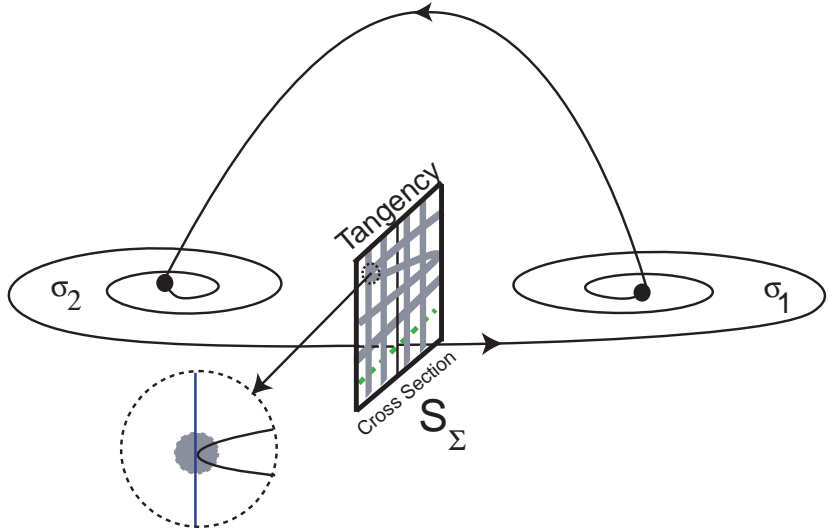}
\end{center}
\caption{ \small Bykov cycle scheme satisfying \textbf{(H1)}--\textbf{(H5)}: in a {$C^1$}--open class of divergence-free vector fields defined on a three-dimensional compact manifold, tangencies of the invariant manifolds of two hyperbolic saddle-foci occur densely.}
\label{conservativo2}
\end{figure}

\subsection{Bykov cycle}

In the present paper, all equilibria are hyperbolic saddle-foci.  The dimension of the unstable manifold of a saddle-focus will be called the \emph{Morse index} of the saddle.  Given two equilibria $\sigma_1$ and $\sigma_2$, a \emph{heteroclinic trajectory} from $\sigma_1$ to $\sigma_2$, denoted 
$[\sigma_1\rightarrow \sigma_2]$, is a solution of $\dot{x}=F(x)$ contained in $W^{u}(\sigma_1)\cap W^{s}(\sigma_2)$. There may be more than one solution from  $\sigma_1$ to $\sigma_2$. A heteroclinic cycle is a finite collection of equilibria together with a set of heteroclinic trajectories connecting the equilibria in a cyclic way. More details in Field \cite{Field}.

In a three-dimensional smooth manifold, a \emph{Bykov cycle} is a heteroclinic cycle associated to two hyperbolic saddle-foci with different Morse indices, in which the one-dimensional manifolds coincide and the two-dimensional invariant manifolds have a transverse intersection \cite{Bykov, LR}. {The terminology \emph{Bykov cycle} is because it was Bykov \cite{Bykov93, Bykov99, Bykov} who studied dynamical properties from the existence of similar cycles in a more general context than conservative ones. This type of cycle, or rather its bifurcation point, is also called by \emph{T-point} by Glendinning and Sparrow \cite{GS}.}

\section{Overview of the main results}
In this section, we state the main theorems and we relate them with other results in the literature.
\subsection{Description}
\label{description}
Our object of study is the dynamics around an autonomous divergence-free smooth vector field whose flow has a Bykov cycle, for which we give an accurate description here. Specifically, we study a {$C^1$}--vector field:
\begin{equation}
\label{general}
F:M\rightarrow TM
\end{equation}
 whose flow has the following properties (see Figure \ref{conservativo2}):
\bigbreak   
\begin{enumerate}
\item[\textbf{\textbf{(H1)}}]\label{H1} There are two hyperbolic saddle-foci equilibria $\sigma_1$ and $\sigma_2$. Assuming that, for $
j\in \{1,2\}$, the Morse index of $\sigma_j$ is $j$, the eigenvalues of $DF(\sigma)$ are:

\begin{enumerate}
\item[\textbf{(a)}] $- C_1 \pm \alpha_1 i$ and $E_1$,  where $2C_1=E_1>0$ and $\alpha_1>0$, for $\sigma= \sigma_1$;
\item[\textbf{(b)}] $ E_2 \pm \alpha_2 i$ and $-C_2$, where $C_2=2E_2>0$ and $\alpha_2>0$, for $\sigma= \sigma_2$.
\end{enumerate}
\bigbreak
\item[\textbf{\textbf{(H2)}}] \label{H2} There is a heteroclinic cycle $\Sigma$ consisting of $\sigma_1$, $\sigma_2$ and two one-dimensional heteroclinic connections: $[\sigma_1 \rightarrow \sigma_2]$ and $[\sigma_2 \rightarrow \sigma_1]$. The solution $[\sigma_1 \rightarrow \sigma_2]$ is called the \emph{fragile connection}.
\bigbreak
\item[\textbf{\textbf{(H3)}}] \label{H3} At the heteroclinic connection $[\sigma_2 \rightarrow \sigma_1]$, the two-dimensional manifolds $W^u(\sigma_2)$ and $W^s(\sigma_1)$ meet transversely.

\bigbreak
\end{enumerate}

Property \textbf{(H3)} 
is generic. Indeed, the transverse intersection of $W^u(\sigma_2)$ and $W^s(\sigma_1)$ of \textbf{(H3)} persists under $C^1$--perturbations, whereas the fragile connection $[\sigma_1 \rightarrow \sigma_2]$ does not. Although Bykov cycles appear naturally in systems with some symmetry as in   Knobloch \emph{et al} \cite{KLW}, Michelson \cite{Michelson}, Rodrigues and Labouriau \cite{RL2014}, unless it is explicitly said, in this paper we state general results without any kind of restriction about the symmetry. 
There are two  possibilities for the geometry of the solutions around the cycle, depending on the direction they turn around the one-dimensional heteroclinic connection $[\sigma_1\rightarrow \sigma_2]$, in the neighbourhoods of the saddle-foci. Under the context of hypotheses \textbf{(H1)}--\textbf{(H3)}, we introduce the following definition adapted from \cite{LR3}. 
\medbreak

Let $V_1$
and $V_2$ be small disjoint neighbourhoods of two saddle-foci $\sigma_1$ and $\sigma_2$ with disjoint boundaries $\partial V_1$ and $\partial V_2$, respectively. 
Typical trajectories starting at $y\in \partial V_1$ near $W^s(\sigma_1)$ go into the interior of $V_1$ in positive time, then follow the connection $[\sigma_1\rightarrow\sigma_2]$, go inside $V_2$, and then come out at $\partial V_2$.  Let $\varphi$ be a piece of the described trajectory from $\partial V_1$ to $\partial V_2$.
Now, join its starting point to its end point by a line segment, forming a closed curve. This curve is called the \emph{loop of} $y$.

\begin{definition}\label{Chi}
The two saddle-foci $\sigma_1$ and $\sigma_2$ in $\Sigma$ have the \emph{same chirality} if the loop of every trajectory in $\partial V_1$ is linked to $\Sigma$ in the sense that, for every $y$  close to $W^s_{loc}(\sigma_1)$, the loop of $y$ and $\Sigma$ cannot be disconnected by an isotopy. Otherwise, we say that $\sigma_1$ and $\sigma_2$ have \emph{different chirality}.
\end{definition}

In contrast with the findings of \cite{KLW, LR, LTW, Rodrigues2}, if the two nodes have different chirality, then the rotations may cancel out. This is a key idea for the proof of the main results, which will be formalized via hypothesis \textbf{(H4)}.
\bigbreak
\begin{enumerate}
\item[\textbf{\textbf{(H4)}}] \label{H4} The saddle-foci $\sigma_1$ and $\sigma_2$ have different chirality.
\end{enumerate}

\bigbreak
A good overview of the previous hypothesis has been considered in \cite{LR3}. It corresponds to the \emph{non-concatenation property} stated in \cite{Rodrigues2}. Property \textbf{(H4)} means that there are two open neighbourhoods of the equilibria such that for any
trajectory going from the first to the second, the direction of its turning around the heteroclinic connection $[\sigma_1 \rightarrow \sigma_2]$ is different. A direct corollary is the following:
\begin{corollary}
The condition \textbf{(H4)} is persistent under \emph{isotopies}: if it holds for $F$, then it holds for continuous one-parameter families containing it, as long as the fragile connection remains.
\end{corollary}
In order to prove our main results, we need some additional assumptions on  the set of parameters $P=\left(\alpha_1, C_1, E_1, \alpha_2, C_2, E_2\right),$
that determine the linear part of the vector field $F$ at the equilibria. 
{For any $a\geq 1$, let $\mathscr{B}$ be the subset of parameters  given by:
\begin{equation}
\label{final_formulae}
\mathscr{B}=\left\{ P:\ 
\left(a^2-\frac{1}{a^2}\right)\frac{2\alpha_1}{C_1 - \sqrt{\alpha_1^2+ 4C_1^2}} 
< \frac{E_2}{\alpha_2}- \frac{a^2 C_1}{\alpha_1} <  
\left(a^2-\frac{1}{a^2}\right)\frac{2\alpha_1}{C_1 + \sqrt{\alpha_1^2+4 C_1^2}}
\right\} .
\end{equation}}
\bigbreak
{Our last hypothesis if the following: 
\bigbreak
\begin{enumerate}
\item[\textbf{(H5)}] \label{H5}
For all $a>1$, $\mathscr{B}$ has non-empty topological interior in $\mathbb{R}^6$ (\emph{i.e.} $int(\mathscr{B}) \neq \emptyset$).
\end{enumerate}
\bigbreak
 Let $\mathscr{D}$ be the dense subset  of $\mathscr{B}$ given by:
\begin{equation}\label{denseSet}
\mathscr{D}=\left\{ P\in int(\mathscr{B}):\ 
\gamma=\frac{\alpha_2}{\alpha_1}\frac{C_1}{E_2} \notin \mathbb{Q}
\right\} .
\end{equation}}

The constant $a$ is related with the construction of the global map around the cycle and will be clarified later.
Observe that condition (\ref{final_formulae}) in the definition of  $\mathscr{B}$ is satisfied by an open set of parameters 
$\alpha_1$, $C_1$, $\alpha_2$, $E_2$ and does not involve the quantities $E_1$ and $C_2$.  The set $\mathscr{D}$ has full Lebesgue measure within the set of parameters $\mathscr{B}$. Hereafter, let $\mathfrak{X}_{\mu,P}^1(M)\subset \mathfrak{X}_{\mu}^1(M)$ denote the set of  vector fields on $M$ satisfying \textbf{\textbf{(H1)}}--\textbf{(H5)}, which is a Baire space \cite{Webster}. {As depicted in Figure \ref{scheme1}, the set $\mathfrak{X}_{\mu,P}^1(M)$ may be seen as an open set in the space $\mathfrak{X}_{\mu}^1(M)$, endowed with the induced topology. }

Let $c_K= 15\sqrt{\frac{22}{19^3}}$. The vector field $F_{c_K}$ in Dumortier \emph{et al} \cite{DIK1} associated to the Michelson system satisfies {\textbf{(H1)}}--{\textbf{(H3)}} but condition \textbf{(H4)} does not hold. 
{$R$-reversibility where $\dim Fix(R)=1$ is a natural obstacle to \textbf{(H4)}.}
In the next section we show a way to construct theoretically a divergence-free vector field displaying a Bykov cycle and satisfying \textbf{\textbf{(H1)}}--\textbf{\textbf{(H5)}}. This is what we call a \emph{conservative Bykov cycle}. 

\subsection{The construction}

\begin{proposition}\label{BykovExists}
Any closed, connected, Riemannian three-dimensional manifold $M$ supports a Bykov he\-te\-roclinic cycle satisfying properties {\textbf{(H1)}}--\textbf{(H5)} associated to some divergence-free vector field $\mathfrak{X}^r_{\mu}(M)$, $r\geq 1$.
\end{proposition}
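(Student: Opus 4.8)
The plan is to reduce the global statement to a purely local construction in a Euclidean ball and then transport it to an arbitrary $M$ through a chart. Since properties \textbf{(H1)}--\textbf{(H5)} only constrain the flow in a neighbourhood of the cycle, and the cycle is contained in a ball, it suffices to produce a vector field $F_0$ on $\mathbb{R}^3$, divergence-free with respect to the standard volume, compactly supported in an open ball $B$ and realizing all five hypotheses inside $B$. Given such $F_0$, pick any smooth embedding of $B$ into $M$ (possible because $M$ is connected). By Moser's deformation argument one may choose the chart so that the pullback of the Riemannian volume of $M$ agrees with the standard Euclidean volume on a neighbourhood of the support of $F_0$; then the push-forward of $F_0$, extended by $\vec 0$ outside the image of $B$, is a $C^r$ divergence-free vector field on $(M,\mu)$ whose flow exhibits the required Bykov cycle. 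The spurious equilibria created outside $B$ by the extension are irrelevant, since the cycle and all of \textbf{(H1)}--\textbf{(H5)} are confined to $B$.

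It then remains to build $F_0$, and I begin by fixing the parameters. The relations $E_1=2C_1$ and $C_2=2E_2$ in \textbf{(H1)} are precisely the conditions that the two linear parts be trace-free, hence divergence-free, and they leave $\alpha_1,C_1,\alpha_2,E_2$ free. Hypothesis \textbf{(H5)} is checked directly on (\ref{final_formulae}): for every $a>1$ the factor $a^2-\tfrac{1}{a^2}$ is positive, while $C_1-\sqrt{\alpha_1^2+4C_1^2}<0$ and $C_1+\sqrt{\alpha_1^2+4C_1^2}>0$, so the left-hand bound is negative and the right-hand bound positive. The admissible range for $\tfrac{E_2}{\alpha_2}-\tfrac{a^2C_1}{\alpha_1}$ is therefore a nonempty open interval, and since $E_1,C_2$ do not appear in the inequality, $int(\mathscr{B})\neq\emptyset$; choosing in addition $\gamma=\tfrac{\alpha_2}{\alpha_1}\tfrac{C_1}{E_2}$ irrational places $P$ in $\mathscr{D}$.

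With $P$ fixed, I place $\sigma_1$ and $\sigma_2$ on a common axis inside disjoint sub-balls and define there the two linear saddle-foci with the eigenvalues of \textbf{(H1)}; these linear blocks are automatically divergence-free. The one-dimensional manifolds are the two rays of the axis, which I make invariant and which carry a single orbit running from the $E_1$-direction of $\sigma_1$ into the $-C_2$-direction of $\sigma_2$, producing the fragile connection $[\sigma_1\to\sigma_2]$ of \textbf{(H2)}. The two-dimensional manifolds $W^u(\sigma_2)$ and $W^s(\sigma_1)$ are surfaces that I steer so as to meet in a one-dimensional intersection; transversality being an open condition, a single intersecting orbit suffices to secure $[\sigma_2\to\sigma_1]$ and \textbf{(H3)}. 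Finally, the orientations of the two spiralling planes are chosen oppositely, so that the windings near $\sigma_1$ and $\sigma_2$ run in opposite senses; by Definition \ref{Chi} this yields different chirality, i.e. \textbf{(H4)}.

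The main obstacle is to interpolate between the two linear blocks and along the connections while keeping the field \emph{exactly} divergence-free, since a naive partition-of-unity gluing destroys this ($\nabla\cdot(fV)=\nabla f\cdot V\neq 0$ in general). I would circumvent this by a vector potential: on the simply connected ball every smooth divergence-free field is a curl $F_0=\nabla\times A$, so I prescribe $A$ to coincide, on each sub-ball where a cut-off equals $1$, with a quadratic potential whose curl is the desired linear part, and then glue the pieces of $A$ freely with bump functions; the resulting $F_0=\nabla\times A$ is divergence-free and compactly supported by construction, and reproduces the prescribed linear parts where the cut-offs are $1$. The delicate point is to verify that the heteroclinic connections persist and that the transverse intersection and the opposite orientations survive the gluing in the transition region where $A$ is interpolated — this is the step requiring genuine care, whereas the parameter verification and the transfer to $M$ are routine.
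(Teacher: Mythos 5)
Your proposal has a genuine gap, and it sits exactly where you concede it does: the heteroclinic connections are never constructed. The vector-potential trick $F_0=\nabla\times A$ with bump-function interpolation of $A$ does solve the problem of keeping the field exactly divergence-free, but it gives zero dynamical control in the transition region: nothing guarantees that the axis stays invariant (so the fragile connection $[\sigma_1\to\sigma_2]$ may simply not exist), that $W^u(\sigma_2)$ ever reaches a neighbourhood of $\sigma_1$ so as to cross $W^s(\sigma_1)$, or that the loops near the connection are linked/unlinked as Definition \ref{Chi} requires. ``Steering'' two-dimensional invariant manifolds of a flow is not an operation one can perform by prescribing the field locally near the equilibria; the manifolds are determined globally by the interpolated field. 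The deferred ``step requiring genuine care'' is precisely the content of the proposition, and the paper resolves it not by an explicit construction but by two substantial black boxes applied after gluing linear models: the $C^1$-genericity of dense trajectories for incompressible flows (Bessa) and the volume-preserving $C^1$-Connecting Lemma (Wen--Xia), used once to create the transverse two-dimensional intersection \textbf{(H3)} and a second time to create the fragile one-dimensional connection \textbf{(H2)} while preserving the first. Any completion of your argument would either need these same tools or a far more delicate explicit construction (e.g.\ imposing a rotational symmetry to force axis invariance and building an explicit flow-box channel carrying $W^u(\sigma_2)$ onto $W^s_{loc}(\sigma_1)$), none of which is sketched.

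A secondary but real defect: extending $F_0$ by $\vec 0$ outside the embedded ball produces a continuum of non-hyperbolic equilibria, which violates the paper's standing hypothesis that all equilibria are hyperbolic saddle-foci and makes the resulting field maximally degenerate. The paper avoids this by doing the global bookkeeping you skip: it plunges the two saddle-foci into $M$ together with finitely many additional traceless hyperbolic equilibria chosen to satisfy Poincar\'e--Hopf, and glues the local linear models into a globally divergence-free field via Moser charts and the Arbieto--Matheus Pasting Lemma (which is the standard answer to your $\nabla\cdot(fV)\neq 0$ worry, rather than the curl construction). Your verification of \textbf{(H5)} and of the trace-free conditions in \textbf{(H1)} is correct, and the Moser-chart transfer of a local model into $M$ is fine; but those are the routine parts, and the core of the proposition remains unproved in your outline.
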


\begin{proof}
Let us consider a Riemannian closed, connected and three-dimensional manifold $M$. We let $\sigma_1, \sigma_2\in \mathbb{R}^3$ be two hyperbolic saddle-foci associated to linear divergence-free vector fields $L_1$ and $L_2$ and complying the hypotheses \textbf{\textbf{(H1)}} and \textbf{(H5)}, and moreover in such a way that \textbf{(H4)} also holds. Now, we would like to plunge in $M$ the open sets containing $\sigma_1$ and $\sigma_2$. 

By the Poincar\'e-Hopf theorem ~\cite{GP}, we know that the sum of the indices of the equilibria of a vector field equals the Euler characteristic of $M$. So, we carefully choose a finite number $(k-2)$ of traceless equilibria, \emph{i.e.} equilibria  associated to divergence-free linear vector fields $\{L_j\}_{j=3}^k$ and fulfilling the Poincar\'e-Hopf Theorem.

Using Moser's charts \cite{Mo} we send (locally) the vector fields $\{L_j\}_{j=1}^k$ and then, using the Pasting lemma \cite{ArMa} we extend to a divergence-free vector field in the whole manifold. Let $F\in\mathfrak{X}^{{r}}_{\mu}(M)$ be the obtained vector field and we still denote by $\sigma_1, \sigma_2\in M$ be two hyperbolic saddle-foci with the stable manifold of $\sigma_j$ having dimension $j$, $j=1,2$.

Using \cite{Bes} we consider $F_1\in\mathfrak{X}^{{r}}_{\mu}(M)$ $C^1$-close to $F$ with a dense trajectory with initial condition $x\in M$, $\varphi_1(t,x)$. Since $\varphi_1(t,x)$ is dense, it passes arbitrarily close to $W^{u}(F_1,\sigma_2)$ and to $W^{s}(F_1,\sigma_1)$. By the $C^1$-Connecting Lemma \cite{WX} we consider $F_2\in\mathfrak{X}^{{r}}_{\mu}(M)$ $C^1$-close to $F_1$ such that $W^{u}(F_2,\sigma_2)$ intersects $W^{s}(F_2,\sigma_1)$. Since these manifolds are two-dimensional and $M$ is three-dimensional we can assume that the intersection is transversal. 

Using again \cite{Bes}, we consider $F_3\in\mathfrak{X}^{{r}}_{\mu}(M)$ $C^1$-close to $F_2$ with some dense trajectory for the flow $\varphi_3(t,.)$ and with $W^{u}(F_3,\sigma_2)$ still intersecting $W^{s}(F_3,\sigma_1)$ transversely. Since the solution is dense, it passes arbitrarily close to $W^{s}(F_3,\sigma_2)$ and to $W^{u}(F_3,\sigma_1)$. Again, by the $C^1$-Connecting Lemma, we consider $F_4\in\mathfrak{X}^{{r}}_{\mu}(M)$ $C^1$-close to $F_3$ such that $W^{s}(F_4,\sigma_2)$ intersects $W^{u}(F_4,\sigma_1)$, non-transversely of course, and with $W^{u}(F_4,\sigma_2)$ still intersecting $W^{s}(F_4,\sigma_1)$ transversely. This satisfies the hypotheses \textbf{(H2)}  and \textbf{(H3)} and therefore we obtain a Bykov heteroclinic cycle associated to the divergence-free vector field $F_4$. 
\end{proof}

{By technical reasons, in Section~\ref{SectionLocalDynamics}, our main results ask for at least $C^4$-regularity of the vector field \cite[pp. 616]{Banyaga}. Nevertheless, if we start with an element in $\mathfrak{X}^1_{\mu}(M)$ exhibiting a Bykov cycle and since the hyperbolic equilibria are $C^1$-stable, similar arguments to those used in Proposition~\ref{BykovExists} can ensure that $C^1$-arbitrarily close to the initial vector field, there exists an element in $\mathfrak{X}^4_{\mu}(M)$ satisfying {\textbf{(H1)}}--\textbf{(H5)}. This is due to the fact that $\mathfrak{X}^\infty_{\mu}(M)$  is $C^1$-dense in $\mathfrak{X}^1_{\mu}(M)$ by ~\cite[Theorem 2.2]{ArMa}. We address the reader to the statements of ~\cite[Theorem 3.1 and 3.2]{ArMa} and the toolboxes used in the $C^1$-Closing and Connecting Lemmas which do not affect the regularity of the initial vector field. }

\subsection{Main results}
By \cite{BessaRocha}, the existence of {hyperbolic} equilibria implies that the flow is not $C^1$--near an Anosov one and thus,  $C^1$--close to the flow, tangencies are expected.  Furthermore, by \cite{BesDu07}, arbitrarily $C^1$--close to a conservative flow displaying a Bykov cycle, we may find an incompressible flow with a dense set of elliptic periodic solutions. {Nonetheless,} these approximated flows might not have a Bykov cycle anymore, due to the fragile connection $ [\sigma_1 \rightarrow \sigma_2]$. {We start with some definitions that make our results more precise.}

\begin{definition}
Let $k\in\mathbb{N}\cup\{0\}$, $N_\Sigma$ be a small tubular neighbourhood of the heteroclinic cycle $\Sigma$ and $S$ be a cross section to  $[\sigma_2 \rightarrow \sigma_1]$. We say that the two-dimensional invariant manifolds $W^u(\sigma_2)$ and $W^s(\sigma_1)$ have a tangency of order $k$ inside $N_\Sigma$ if the following conditions hold:
{\begin{itemize}
\item the manifolds $W^u(\sigma_2)$ and $W^s(\sigma_1)$ meet tangencially;
\item there is a solution contained in $W^u(\sigma_2) \cap W^s(\sigma_1)$ which lies entirely in $N_\Sigma$ and has exactly $(k+1)$ intersection points with $S$. 
\end{itemize}}
A tangency of order 1 inside $N_\Sigma$ is also called by \emph{primary tangency}.
\end{definition}
Our first result is the following:

 \begin{maintheorem}
\label{Main1}
There exists a $C^1$--dense subset $\mathcal{D}$ of the $C^1-$open set $\mathfrak{X}_{\mu, P}^1(M)$ such that for any $F\in\mathcal{D}$ and any tubular neighbourhood $N_\Sigma$ of the Bykov cycle, the flow of (\ref{general}) displays a primary tangency inside $N_\Sigma$ between the manifolds $W^u(\sigma_2)$ and $W^s(\sigma_1)$.
\end{maintheorem}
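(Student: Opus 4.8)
The plan is to localize the problem to a single cross-section and to work entirely inside the tubular neighbourhood $N_\Sigma$, reducing the existence of a primary tangency to a transversality/perturbation argument for the global return map. First I would fix a small cross-section $S$ transverse to the transverse connection $[\sigma_2\rightarrow\sigma_1]$ and set up local coordinates near each saddle-focus using the linearization afforded by the eigenvalue data in \textbf{(H1)}; here the $C^4$-regularity promised after Proposition \ref{BykovExists} is what licenses a $C^1$-smooth linearizing (Sternberg/Banyaga-type) change of coordinates. In these coordinates the local passage maps near $\sigma_1$ and $\sigma_2$ are explicit spiralling maps governed by the ratios $C_1/\alpha_1$ and $E_2/\alpha_2$, and the composition with the (transverse, hence mild) global connection map along $[\sigma_2\rightarrow\sigma_1]$ yields a return map $\Pi$ on $S$. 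The images of $W^u(\sigma_2)\cap S$ and $W^s(\sigma_1)\cap S$ under iterating through the cycle become two families of spiral curves on $S$, and a primary tangency is exactly a first-order (quadratic) contact between the appropriate branches after one turn.

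The second step is to exploit chirality, \textbf{(H4)}. The point of different chirality is that the two local rotations turn in opposite senses, so the two spiral families do not rotate coherently as one increases the radial iteration parameter; instead their relative angular position sweeps monotonically. I would parametrize the relevant branches of $W^u(\sigma_2)$ and $W^s(\sigma_1)$ on $S$ by a radial coordinate that, under the logarithmic-spiral geometry, corresponds to an angular phase advancing at the irrational rate $\gamma=\frac{\alpha_2}{\alpha_1}\frac{C_1}{E_2}$ singled out in the dense set $\mathscr{D}$. Transversality of the connection (\textbf{(H3)}) guarantees that the two curve-families are themselves transverse generically and merely cross, so that as the phase advances the signed angle (or distance) between a branch of $W^u(\sigma_2)$ and a branch of $W^s(\sigma_1)$ changes sign. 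By an intermediate-value argument this sign change forces the existence of radii at which the branches are tangent; the different chirality is precisely what makes this sweep genuine rather than a rigid rotation that could preserve transversality indefinitely. The role of \textbf{(H5)} and the inequality defining $\mathscr{B}$ is to keep the relevant eigenvalue ratios in the open regime where this geometric sweep is controlled and the contact is quadratic (order one), giving a \emph{primary} tangency with exactly two intersection points with $S$.

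The third step is the density and perturbation argument producing the subset $\mathcal D$. The tangencies produced by the sweep are at isolated parameter/phase values, so for a fixed $F$ one may only have near-tangencies. To promote this to an honest tangency inside every $N_\Sigma$, I would invoke a divergence-free version of the connecting/perturbation machinery already used in Proposition \ref{BykovExists} (the $C^1$-Connecting Lemma of \cite{WX} together with the Pasting lemma \cite{ArMa}), applied in a small flowbox supported inside $N_\Sigma$ and away from the equilibria so as not to destroy \textbf{(H1)}--\textbf{(H4)}: a small incompressible perturbation can slide one manifold to realize the tangency detected by the sweep, while \textbf{(H3)} guarantees the transverse connection (and hence the cycle structure) survives. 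Iterating over a countable basis of tubular neighbourhoods and taking the generic intersection yields the $C^1$-dense set $\mathcal D$ on which a primary tangency occurs inside any prescribed $N_\Sigma$.

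The main obstacle I expect is controlling the global map in the \emph{conservative} category while retaining enough smoothness for the contact order to be well-defined. The difficulty is twofold: first, one must verify that the quadratic (order-one) nature of the contact is genuinely dictated by the linear data, which requires careful estimates on the non-resonant linearization and on how the transverse global map $[\sigma_2\rightarrow\sigma_1]$ distorts the spiral geometry — this is where the auxiliary constant $a$ and the two-sided inequality in $\mathscr{B}$ must be used quantitatively rather than qualitatively. Second, the perturbations must be divergence-free and localized, so the standard (dissipative) tangency-creation arguments cannot be cited directly; the delicate part is showing that an incompressible perturbation has enough freedom to realize the tangency while preserving volume and the cycle. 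The irrationality condition $\gamma\notin\mathbb{Q}$ in $\mathscr{D}$ is what ultimately guarantees the angular sweep is dense enough to hit the tangency condition in every neighbourhood, so tying the intermediate-value/density argument cleanly to that irrationality is the crux of the whole proof.
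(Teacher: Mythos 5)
Your outline follows the paper's own route: conservative linearization near the saddle-foci via \cite{Banyaga}, explicit local and global maps whose composition sends the segment $W^u(\sigma_2)\cap In(\sigma_1)$ to a curve $\eta\circ\beta$ in $Out(\sigma_2)$, chirality \textbf{(H4)} together with \textbf{(H5)} forcing that curve to reverse its angular direction, irrationality of $\gamma$ making the reversal points dense, and the volume-preserving $C^1$-Connecting Lemma of Wen--Xia \cite{WX}, supported away from the fragile connection, to create the tangency without destroying the cycle. However, two of your steps have genuine gaps. In step two, the intermediate-value claim is wrong as stated: the sign changes of $dx_2/ds$ produce points where $\eta\circ\beta$ has a \emph{vertical tangent} (a reversal), not points where it is tangent to $W^s(\sigma_1)\cap Out(\sigma_2)$. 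A tangency of the two manifolds requires, in addition, that a reversal point lie exactly on the curve $W^s(\sigma_1)\cap Out(\sigma_2)$, which is a codimension-one coincidence; if the sweep alone forced tangencies for the fixed $F$, the theorem would hold on all of $\mathfrak{X}_{\mu,P}^1(M)$ rather than on a dense subset (you implicitly retract the claim in step three, but as written it is inconsistent). Relatedly, you misidentify the role of \textbf{(H5)}: the two-sided inequality defining $\mathscr{B}$ is precisely the solvability condition $\min A(\varphi)\le -\frac{\alpha_1 E_2}{\alpha_2}\le \max A(\varphi)$ for $\frac{dx_2}{ds}=0$, i.e.\ it guarantees that reversals exist at all; it is not about keeping the contact quadratic.

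The second gap sits exactly at the point you flag as the crux but leave unresolved. The Connecting Lemma produces a heteroclinic \emph{intersection} of $W^u(\sigma_2)$ and $W^s(\sigma_1)$ near a chosen point $z$; it gives you no control to ``slide one manifold'' into tangential contact. The paper closes this as follows: apply \cite{WX} near a reversal point, with the perturbation support disjoint from $[\sigma_1\rightarrow\sigma_2]$ so the cycle survives; if the created intersection is tangential, the proof ends; if it is transverse, take a path $\{Z_s\}_{s\in[0,1]}$ in $\mathfrak{X}_{\mu,P}^1(M)$ from the original field (no intersection near $z$) to the perturbed one (transverse intersection), and since the accumulation of $W^u(\sigma_2)$ there is of quadratic type, some intermediate $Z_s$ must display a tangential intersection. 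Without this continuation-through-a-path argument (or a substitute), your step three produces connections, not tangencies. Finally, your device for building $\mathcal{D}$ --- ``iterating over a countable basis of tubular neighbourhoods and taking the generic intersection'' --- does not work: for a fixed $N_\Sigma$ the set of fields with a tangency inside $N_\Sigma$ is dense but neither open nor residual (tangencies are fragile), and countable intersections of merely dense sets need not be dense. The statement for all $N_\Sigma$ comes instead from the fact that the reversal points accumulate on the cycle itself, so the same perturbation can be performed inside any prescribed tubular neighbourhood.
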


{The perturbations of Theorem \ref{Main1} are performed within a degenerate} subclass of incompressible flows exhibiting Bykov cycles. The density of flows exhibiting tangencies is achieved without breaking the Bykov cycle. The proof of this result will be addressed in Section \ref{SectionT-pointHetero}, where we concentrate our attention on the geometric intersection between the invariant manifolds.
Now, we consider a generalized version of the Cocoon bifurcations described in \cite{DIK, KE, Lau}, which can be seen as a series of global and local bifurcations that create an infinite number of heteroclinic connections. The authors of  \cite{DIK, KE} called cocoon to these series of bifurcations because it forms a cocoon like structure in a section transverse to $\Sigma$.

\begin{definition}
\label{Definition_Cocoon}
Let $F \in \mathfrak{X}_{\mu,P}^1(M)$. We say that $F$ exhibits a  \emph{generalized cocooning cascade of heteroclinic tangencies} if, for every $L>0$ large, there exists a closed two-dimensional torus $\mathbb{T}^2$ such that:
\begin{enumerate}
\item for $i \in \{1,2\}$, $\sigma_i \notin \mathbb{T}^2$;
\item the vector field $F$ can be $C^1$-approximated by $F^\star \in \mathfrak{X}_{\mu,P}^1(M)$ whose flow displays a tangency between $W^u(\sigma_2)$ and $W^s(\sigma_1)$,
intersecting twice $\mathbb{T}^2$ and having length greater than $L$ within $\mathbb{T}^2$.
\end{enumerate}
\end{definition}

{Definition \ref{Definition_Cocoon} is different to the general case given in \cite{DIK} in which the authors considered generic unfoldings of $F$.} In \cite{DIK}, when following a given sequence of vector fields converging to $F$, a saddle-node bifurcation takes place, creating a pair of elliptical and hyperbolic periodic solutions, followed by an infinite cascade of period doubling bifurcations. Since there are infinitely many heteroclinic tangencies occurring near the primary tangency, in Section \ref{Cocoon_section} we prove that:

\begin{maintheorem}
\label{Main2}
Let $\mathfrak{X}_{\mu, P}^1(M)$ be the $C^1$-open set of Theorem \ref{Main1}. For any $F\in\mathfrak{X}_{\mu, P}^1(M)$, and any tubular neighbourhood $N_\Sigma$ of the Bykov cycle,  the vector field $F$ exhibits a generalized cocooning cascade of heteroclinic tangencies. 
\end{maintheorem}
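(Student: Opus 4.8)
The plan is to read Definition~\ref{Definition_Cocoon} as a quantitative refinement of Theorem~\ref{Main1}: rather than producing a single tangential contact, I would manufacture, for each prescribed $L>0$, a heteroclinic tangency whose connecting solution is forced to be long by making it wind many times around the cycle before the contact closes up. Fix $F\in\mathfrak{X}_{\mu,P}^1(M)$ and a tubular neighbourhood $N_\Sigma$ of $\Sigma$. I would first take $\mathbb{T}^2:=\partial\mathcal{T}$ to be the boundary of a solid-torus tube $\mathcal{T}\subset N_\Sigma$ around the cycle $\Sigma$, chosen thin enough that both saddle-foci lie in its solid core (so that condition~(1) holds, $\sigma_1,\sigma_2\notin\mathbb{T}^2$) while the tangential contacts constructed below fall in $N_\Sigma\setminus\mathcal{T}$. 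The whole difficulty is then concentrated in producing the approximating field $F^\star$ of condition~(2).

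On the cross-section $S$ transverse to $[\sigma_2\rightarrow\sigma_1]$ I would reuse the return-map description already set up for Theorem~\ref{Main1}: composing the local maps near $\sigma_1$ and $\sigma_2$ (governed by the linear parts \textbf{(H1)}, hence by the spiralling induced by the complex eigenvalues) with the global transition maps, the traces of $W^u(\sigma_2)$ and of $W^s(\sigma_1)$ on $S$ appear as spirals accumulating on the relevant invariant circles. Because the two nodes have \emph{different} chirality \textbf{(H4)}, the two rotations partially cancel instead of reinforcing, so the two spiralling traces cross one another infinitely often; a tangential crossing at the $k$-th turn is exactly a tangency of order $k$ in the sense of the preceding definition, its connecting solution meeting $S$ in $k+1$ points. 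Thus near the primary tangency of Theorem~\ref{Main1} there is, for every $k\in\mathbb{N}$, a nearby parameter value at which an order-$k$ tangency occurs; just as in Theorem~\ref{Main1}, each such tangency is realised by a $C^1$-small, divergence-free, compactly supported deformation (built with Moser charts and the Pasting Lemma exactly as in Proposition~\ref{BykovExists}) that selects the $k$-th crossing without destroying either the robust connection $[\sigma_2\rightarrow\sigma_1]$ or the fragile one, so that $F^\star$ stays in $\mathfrak{X}_{\mu,P}^1(M)$.

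It remains to convert ``high order'' into ``long, with two crossings of $\mathbb{T}^2$''. The order-$k$ connecting solution $\gamma_k$ runs $k$ times around the cycle, each turn traversing a full loop close to $\Sigma$ and hence contributing arclength bounded below by a fixed fraction of the length of $\Sigma$; therefore the length of $\gamma_k\cap\mathcal{T}$ grows at least linearly in $k$ and exceeds any prescribed $L$ once $k$ is large. Since $\gamma_k$ is asymptotic to $\sigma_1$ and $\sigma_2$ inside the core of $\mathcal{T}$ and performs a single tangential excursion near its contact point (which lies outside the thin tube), it exits and re-enters $\mathcal{T}$ exactly once, so it meets $\mathbb{T}^2$ in precisely two points; taking $k=k(L)$ large enough yields the required $F^\star$. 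Ordering the successive $k$ by the monotone branches of Lau's Time Delay function~\cite{Lau} renders the cascade explicit and recovers the cocooning picture of~\cite{DIK,KE}.

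The main obstacle I anticipate is the simultaneous bookkeeping in this last step: one must guarantee that the selected contact is genuinely tangential (not transverse), that it has \emph{exactly} order $k$ and produces \emph{exactly} two intersections with $\mathbb{T}^2$, and that the deformation achieving it is at once $C^1$-small, divergence-free and structure-preserving, all while the length contributed by each near-cycle passage is controlled uniformly in $k$ and the deformation remains inside the degenerate class $\mathfrak{X}_{\mu,P}^1(M)$. The geometric heart that makes a genuine cascade — rather than one isolated tangency — available is the chirality condition \textbf{(H4)}: it is precisely the cancellation of the two rotations that turns the spiralling traces into two curves crossing tangentially at every order.
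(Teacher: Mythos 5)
Your overall strategy is the paper's: run the machinery of Theorem~\ref{Main1} recursively so that the successive images of $W^u(\sigma_2)$ under the return map retain the dense reversals property, create a tangency of arbitrarily high order $n$ by a cycle-preserving divergence-free $C^1$-perturbation, and then exhibit a torus around $\Sigma$ that the long connecting solution enters and leaves while accumulating length greater than $L$ inside. There are, however, two concrete points where your write-up deviates from what actually carries the argument. First, the perturbation tool: you propose to realise each tangency ``with Moser charts and the Pasting Lemma exactly as in Proposition~\ref{BykovExists}''. That is a misattribution. Moser charts and the Pasting Lemma are used in the paper only to build the initial example of a vector field satisfying \textbf{(H1)}--\textbf{(H5)}; the creation of a tangency \emph{without breaking the fragile connection} is done --- in step \textbf{(V)} of Theorem~\ref{Main1} and again, explicitly, in the paper's proof of Theorem~\ref{Main2} --- with the volume-preserving Connecting Lemma of Wen and Xia (Theorem~\ref{WeX}), followed by the path argument $\{Z_s\}_{s\in[0,1]}$ that converts a transverse connection into a tangential one. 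The paper stresses that generic-unfolding-type arguments are unavailable in the $C^1$ volume-preserving category, which is exactly why the Connecting Lemma is needed; a Pasting-Lemma surgery does not by itself give control of the relative position of the two invariant manifolds.

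Second, the order of choices. You fix the thin tube $\mathcal{T}$ (hence $\mathbb{T}^2=\partial\mathcal{T}$) \emph{before} constructing the order-$k$ tangency, and then assert that the connecting solution exits and re-enters $\mathcal{T}$ exactly once. This is not justified: an order-$k$ connection meets the cross-section in $k+1$ points, and the intermediate intersections --- the backward images of the contact point under the return map --- have angular coordinates that need not be close to the cycle, so the solution may leave a pre-fixed thin tube at each of its intermediate turns, producing many crossings of $\partial\mathcal{T}$ rather than two. The paper sidesteps this by reversing the quantifiers: it first constructs $F^\star$ exhibiting the order-$n$ tangency and only then defines the torus, adapted to that particular solution (this is legitimate, since Definition~\ref{Definition_Cocoon} only asks for the existence of the pair). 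Your linear-in-$k$ length estimate is fine, but the two-crossings bookkeeping should be carried out with the torus chosen last, not first.
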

The proof is based on \cite{DIK} and on the $C^1$-Connecting Lemma for volume-preserving flows ~\cite{WX}. These heteroclinic bifurcations are responsible for the folding and for the fractal structure of the two-dimensional invariant manifolds in the cross section $S$ previously described by Lay \cite{Lau}. The Cocoon bifurcations might be observed numerically with the \emph{Time Delay function}. A discussion of the mechanism explaining the scape of points is given in \cite[Section 4]{DIKS}.  

\medbreak

\begin{figure}\begin{center}
\includegraphics[height=6cm]{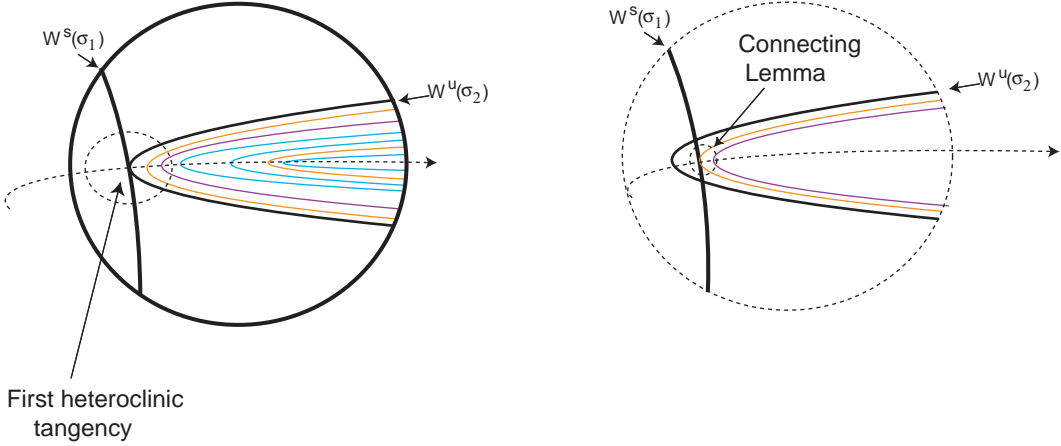}
\end{center}
\caption{\small Cocoon bifurcations: global bifurcations that create an infinite number of heteroclinic connections. Illustration of Theorem \ref{Main2}, emphasizing the region in which we apply the $C^1$-Connecting Lemma. The effect of moving the invariant two-dimensional manifolds corresponds to the motion suggested by the arrow.}
\label{Cocoon}
\end{figure}

In the context of  dissipative diffeomorphisms containing homoclinic points, Newhouse \cite{Newhouse2, Newhouse1} introduced the term \emph{wild attractor} for hyperbolic sets whose invariant manifolds have a tangency. Newhouse described what happens in a one-parameter
unfolding, when a tangency splits, and discovered nontrivial, transitive and hyperbolic sets whose stable and unstable invariant manifolds have persistent nondegenerate tangencies. A given tangency may be removed by a small perturbation, however one cannot avoid the appearance of new tangencies.  This theory can be extended to heteroclinic tangencies -- see \cite[Section 3.2]{BDV}. 

\medbreak
\begin{definition}
\label{Elliptic}
Let $R$ be a diffeomorphism defined on a compact section $M$ transverse to $\Sigma$. We say that $p\in M$ is an \emph{elliptic periodic orbit} of period $n \in \mathbb{N}$ for the diffeomorphism $R$ if the following conditions hold:
\begin{itemize}
\item $R^n(p)=p$ and $R^i(p)\not=p$, for all $i=1,...,n-1$ and
\item the eigenvalues of the map $DR^n_p \in \SL(2, \mathbb{R})$ are non-real and have modulus equal to 1.
\end{itemize}
\end{definition}
\medbreak

From now on, the map $R$ of Definition \ref{Elliptic} can be thought as the first return map to $S$ (induced by the flow). By the Generalised Stokes Theorem \cite{Marsden}, the return map of a volume-preserving vector field to any transverse section $S$ preserves an `area-measure', and so it may be seen as an area-preserving diffeomorphism -- see Lemma \ref{area-preserving}.
 Newhouse's results  near tangencies have been extended to conservative maps in \cite{Duarte}: any area-preserving map with a heteroclinic tangency can be $C^2$--approximated by an open domain in the space of area-preserving maps exhibiting persistent tangencies. Generic diffeomorphisms on such a domain exhibit infinitely many elliptic islands, which take the role of attractors in the dissipative context. 
 
 \begin{definition}
 Let $R$ be the first return map to a section transverse to the cycle $\Sigma$. A 1-periodic solution of the flow of (\ref{general}) is a periodic solution associated to a fixed point of $R$ (also called by \emph{single round periodic solution}).
  \end{definition} 
  
  We obtain the following result:
 
\begin{maintheorem}
\label{corollaryPeriodDoubling}
Let $F\in\mathfrak{X}_{\mu, P}^1(M)$.
The map $R$ defines a sequence of conservative horseshoes accumulating on $\Sigma$.  There are persistent heteroclinic tangencies of the invariant manifolds associated to periodic solutions and infinitely many 1-periodic elliptic solutions nearby.
\end{maintheorem}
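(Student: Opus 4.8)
The plan is to obtain the three assertions from ingredients already in place: hypothesis \textbf{(H3)} for the horseshoes, Theorem \ref{Main1} combined with Duarte's conservative version of Newhouse's theorem \cite{Duarte} for the persistent tangencies, and the cocooning cascade of Theorem \ref{Main2} for the elliptic orbits. Throughout, I work with the first return map $R$ to a cross section $S$ transverse to $[\sigma_2\rightarrow\sigma_1]$, which is area-preserving by Lemma \ref{area-preserving}, so that every step is carried out in the category of $C^2$ area-preserving diffeomorphisms and the eigenvalue condition of Definition \ref{Elliptic} refers to the induced $\SL(2,\mathbb{R})$ linear part of $R$.

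First I would build the horseshoes. The linearizations at $\sigma_1$ and $\sigma_2$ fixed by \textbf{(H1)} give the local passage maps near each saddle-focus the Shilnikov form, a rotation whose angle grows logarithmically as one approaches the local stable manifold, composed with a hyperbolic contraction/expansion. Hence the domain of $R$ in $S$ decomposes into a sequence of strips $\{U_k\}_{k\in\mathbb{N}}$ indexed by the number of turns an orbit makes near the foci, with $U_k\to\Sigma$ as $k\to\infty$. Composing the local maps with the global transition along the transverse connection $[\sigma_2\rightarrow\sigma_1]$, the transverse intersection of $W^u(\sigma_2)$ and $W^s(\sigma_1)$ from \textbf{(H3)} produces, by a Smale--Birkhoff/$\lambda$-lemma argument, a nontrivial hyperbolic basic set $\Lambda_k\subset U_k$ for each large $k$. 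As $R$ preserves area these are conservative horseshoes, and the inclusions $\Lambda_k\subset U_k$ force the sequence $\{\Lambda_k\}$ to accumulate on $\Sigma$.

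Next come the tangencies and the elliptic orbits. By Theorem \ref{Main1}, arbitrarily $C^1$-close to $F$, and without destroying the cycle, there is a primary tangency between $W^u(\sigma_2)$ and $W^s(\sigma_1)$ inside $N_\Sigma$; read in $S$ this is a tangency between branches of the invariant manifolds of some $\Lambda_k$. Duarte's theorem \cite{Duarte} then places the corresponding area-preserving map in the closure of an open domain of $C^2$ area-preserving diffeomorphisms exhibiting $C^2$-robust tangencies between the stable and unstable manifolds of hyperbolic periodic points of $\Lambda_k$; realizing this domain by perturbations of $F$ within $\mathfrak{X}_{\mu,P}^1(M)$ yields the persistent heteroclinic tangencies of the invariant manifolds of periodic solutions. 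For the elliptic orbits I would use Theorem \ref{Main2}: in the area-preserving setting each step of the cocooning cascade is a saddle--centre bifurcation, creating simultaneously a hyperbolic and an elliptic single-round periodic orbit, that is, an elliptic fixed point of $R$ in one of the strips $U_k$. Since the cascade is infinite and its bifurcation loci accumulate on $\Sigma$, this produces infinitely many $1$-periodic elliptic solutions; being non-degenerate elliptic fixed points of area-preserving maps they are KAM-stable and hence survive on an open set, so the coexistence with the horseshoes is persistent.

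The hard part will be this last step: ensuring that the created single-round orbits are genuinely elliptic in the sense of Definition \ref{Elliptic}, i.e. that the eigenvalues of $DR$ at the new fixed point are non-real of modulus $1$, and that this holds for infinitely many strips $U_k$ at once rather than only at isolated parameters. This amounts to computing the $\SL(2,\mathbb{R})$ linear part of the single-round return map across the strips in terms of the parameters $P$ and showing, using the open condition \textbf{(H5)}, that its trace lies in $(-2,2)$ for infinitely many windings, while the irrationality condition defining $\mathscr{D}$ keeps the rotation number Diophantine enough to avoid low-order resonances, so that the centres neither degenerate nor are immediately destroyed as one moves along the cascade. The constant $a$ entering \textbf{(H5)} is precisely what controls the global map and hence this trace estimate.
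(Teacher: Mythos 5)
Your construction of the horseshoes and of the persistent tangencies is sound and close in spirit to the paper's own argument (which quotes the strip analysis of \cite{GS} and the conservative Newhouse phenomenon of \cite{Duarte}, together with the realization of the return map as in \cite{BessaRocha}), but the elliptic-orbit step has a genuine gap. You derive the elliptic points from Theorem \ref{Main2}, asserting that ``each step of the cocooning cascade is a saddle--centre bifurcation, creating simultaneously a hyperbolic and an elliptic single-round periodic orbit''. Theorem \ref{Main2} gives no such bifurcation: the \emph{generalized} cocooning cascade of Definition \ref{Definition_Cocoon} only produces $C^1$-approximations of $F$ whose flows display long heteroclinic tangencies crossing a torus. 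The saddle-node/period-doubling structure you invoke belongs to the classical cocoon bifurcation of \cite{DIK}, which requires a generic one-parameter unfolding of the fragile connection; precisely because such generic unfoldings are unavailable in the volume-preserving, cycle-preserving setting, the paper replaces them by the Connecting Lemma in part \textbf{(V)} of the proof of Theorem \ref{Main1}, and its Definition \ref{Definition_Cocoon} is correspondingly weaker. So the cascade cannot be the source of the elliptic points.

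What you label ``the hard part'' and defer --- computing the $\SL(2,\mathbb{R})$ linear part of the single-round return map and showing its trace lies in $(-2,2)$ --- is in fact the entire content of the paper's proof, and it is carried out directly, not through Theorem \ref{Main2}. The paper computes, near the points of $In(\sigma_1)$ where the dense reversals property holds,
$$\mathrm{Tr\ } DR(x,y)= o(y)+ \frac{1}{y}\frac{\alpha_2}{E_1 E_2 C(\varphi)}\left( A(\varphi)-\frac{\alpha_1 E_2}{\alpha_2}\right),$$
and observes that at reversal points the second term vanishes, since the reversal condition (\ref{tang_final}) from the analysis of Theorem \ref{Main1} is exactly $A(\varphi)=\alpha_1 E_2/\alpha_2$; hence $\mathrm{Tr\ }DR=o(y)$ there. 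For $y\approx 0$ this gives a strip around each reversal point on which the trace stays in $(-2,2)$, and since $\det DR=1$ by Lemma \ref{area-preserving}, every 1-periodic point in that strip is elliptic by the remark on $\SL(2,\mathbb{R})$ matrices. Intersecting the strip with the 1-periodic solutions accumulating on the tangency yields the infinitely many elliptic solutions. Note also that no Diophantine or resonance-avoidance condition is needed for ellipticity in the sense of Definition \ref{Elliptic}: the trace condition alone suffices, and the irrationality condition defining $\mathscr{D}$ is used in Theorem \ref{Main1} to obtain density of reversals, not here. Your instinct about where the difficulty lies is correct, but the argument you outline routes it through a theorem that cannot deliver it and leaves the decisive computation undone.
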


The realization of $R$ as return map associated to a divergence-free vector field $C^1$-close to the original follows the same lines as \cite[Section  3]{BessaRocha}.

\medbreak
\textbf{The method.} Our analysis follows a classic procedure. We first construct a model of the dynamics near a Bykov cycle in terms of Poincar\'e maps between neighbourhoods of the saddle-foci, and then we analyze the algebraic bifurcation equations. Near each equilibrium we derive local maps by assuming that the flow is governed by appropriate linearized vector fields in these regions. The flow near the connections is approximated by linearizing about each one, which enables us to derive global maps. Appropriate composition of the local and global maps yields the desired return map. Our aim is to provide a direct and intuitive geometric picture and to uncover the scaling laws of the codimension-one bifurcations that occur near the cycle.

\section{Dynamics near each saddle-focus (Local and Global)}
\label{SectionLocalDynamics}
In this section, we establish local coordinates near the equilibria $\sigma_1$ and $\sigma_2$ and define some notation that will be used in sequel. The starting point is an application of a quite useful conservative version of Sternberg's Theorem for $C^1$--Linearization proved by Banyaga, de la Llave and Wayne \cite{Banyaga} to $C^1$--linearize the vector field around the saddle-foci and to introduce cylindrical coordinates around them. For each saddle, we obtain the expression of the local map that sends points in the border where the flow goes in, into points in the border where the flows goes out in positive time \emph{i.e.} by forward iteration. 
\medbreak
We also establish a convention for the transition maps from one neighbourhood to the other -- see also \cite[Lemma 3.4]{Bessa_ETDS} where it is proved the conservative flowbox theorem.

\subsection{Conservative linearization and new coordinates near the saddles}
\label{linearization}
Since the open and dense non-ressonance conditions of \cite[Theorem 1.2]{Banyaga} are satisfied, then around the saddle-foci the vector field $F$ is $C^1$--conjugate to its linear part. In cylindrical coordinates $(\rho ,\theta ,z)$ the linearizations at $\sigma_1 $ and $\sigma_2 $ take the form, respectively:
\begin{equation}\label{local1}
\dot{\rho}=-C_{1}\rho \qquad\wedge\qquad \dot{\theta}=\alpha_{1} \qquad \wedge \qquad \dot{z}=E_{1 }z,
\end{equation}
and
\begin{equation}\label{local2}
\dot{\rho}=E_{2 }\rho \qquad\wedge\qquad \dot{\theta}= - \alpha_{2 } \qquad \wedge \qquad \dot{z}=-C_{2 }z.
\end{equation}

Rescaling coordinates, we may consider 
cylindrical neighbourhoods of $\sigma_1 $ and $\sigma_2 $ in ${M}$ of radius $1$ and height $2$ that we denote by $V_1$ and $V_2$, respectively, as illustrated in Figure \ref{conservativo3}. Their borders have three components: 
\begin{itemize}
\item[\textbf{(i)}]
The \emph{wall} parametrized by 
$x\in \mathbb{R}\pmod{1}$ and $|y|\leq 1$ with the usual cover: 
$$(x,y)\mapsto (1 ,x,y)=(\rho ,\theta ,z).$$ Here $y$ represents the  height of the cylinder and $x$ is the angular coordinate, measured from the point $x=0$ in the heteroclinic connection $[\sigma_2\to\sigma_1]$. 
\bigbreak
\item[\textbf{(ii)}]
Two disks, the top and the bottom of the cylinder. 
We assume the fragile heteroclinic connection $[\sigma_1\to \sigma_2]$ goes from the top of one cylinder to the top of the other, and we take a polar covering of the top disk:
$$(r,\varphi )\mapsto (r,\varphi ,1 )=(\rho ,\theta ,z)$$
where 
$0\leq r\leq 1$ and $\varphi \in \mathbb{R}\pmod{2\pi}$.
\end{itemize}
\medbreak
On these cross sections, we will define the Poincar\'e maps to study the dynamics near the cycle.

\medbreak
Consider the cylinder wall  of $V_1$, that meets the stable manifold of $\sigma_1$, denoted by $W^{s}(\sigma_1)$, on the circle parametrized by $y=0$. The top part  $y\ge 0$ of the wall near $\sigma_1$ will be denoted by $In(\sigma_1)$. By construction, trajectories starting at interior points of $In(\sigma_1)\backslash W^s(\sigma_1)$ go into the cylinder in positive time and come out at the cylinder top, denoted $Out(\sigma_1)$. Solutions starting at interior points of $Out(\sigma_1)$ go outside the cylindrical neighbourhood in negative time. In these coordinates, the invariant manifold $W^{u}(\sigma_1)$ is the $z$--axis, intersecting $Out(\sigma_1)$ at the origin of the polar coordinates. Reversing the time, we get dual statements for $\sigma_2$. After linearization, the one-dimensional invariant manifold of $\sigma_2$, $W^{s}(\sigma_2)$, is the $z$--axis, intersecting the top of the cylinder, $In(\sigma_2)$, at the origin of its coordinates. Trajectories starting at  interior points of $In(\sigma_2)$
go into $V_2$. Trajectories starting at interior points of the cylinder wall  $Out(\sigma_2)$
go into $V_2$ in negative time. The set $Out(\sigma_2)\cap W^{u}(\Sigma_2)$ is parametrized by $y=0$.
Trajectories that start at 
$In(\sigma_2)\backslash W^s(\sigma_2 )$  leave the cylinder $V_2$ at $Out(\sigma_2)$.

\begin{figure}
\begin{center}
\includegraphics[height=8.2cm]{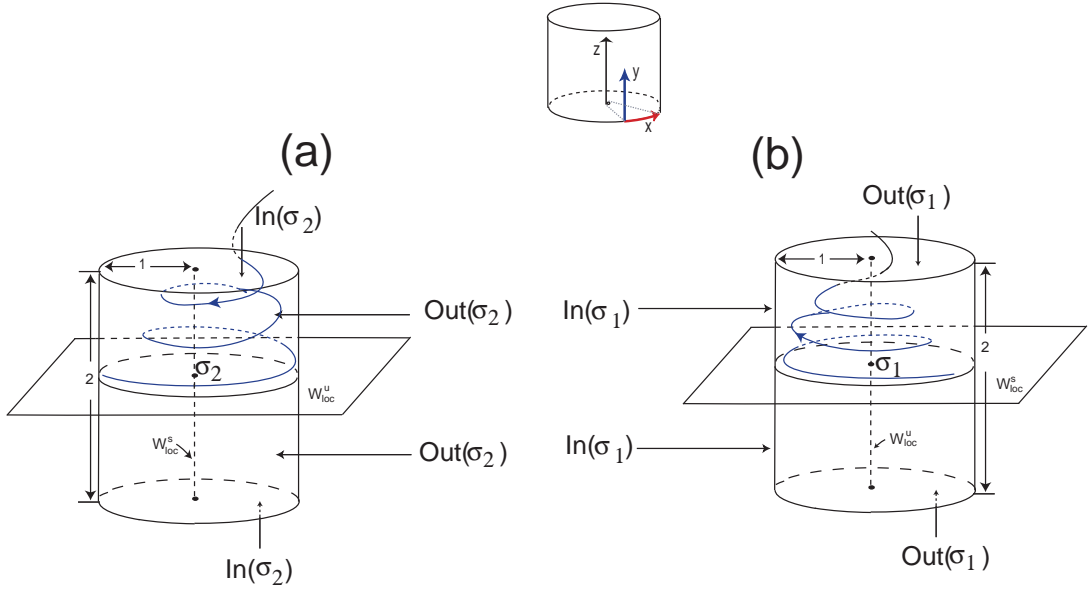}
\end{center}
\caption{\small Local cylindrical neighbourhoods $V_2$ and $V_1$ near $\sigma_2$ and $\sigma_1$, respectively. The relative confi\-guration of the saddle-foci is the same as in Figure \ref{conservativo2}.}
\label{conservativo3}
\end{figure}

\subsection{Conservative transition maps}\label{CTM}
Let $x\in M$ be a regular point, $T>0$ and $$\mathcal{P}_{F}^{T}(x):\mathcal{V}_{x}\subset\mathcal{N}_{x}\rightarrow\mathcal{N}_{\varphi(T,x)}$$ 
be the Poincar\'{e} map, where $\mathcal{N}_{\varphi(s,x)}= F(\varphi(s,x))^\perp$, $s=0,T$, is the surface contained in $M$ whose tangent space at $\varphi(s,x)$ is generated by $F(\varphi(s,x))^\perp$, $s=0,T$, and $\mathcal{V}_{x}$ is a small neighborhood in $\mathcal{N}_{x}$ of $x$. We can guarantee the existence of a continuous time-$t$ arrival function $\tau(x,t)(\cdot)$ from $\mathcal{V}_{x}$ into $\mathcal{N}_{\varphi(T,x)}$. If $\mathcal{V}_{x}\subset\mathcal{N}_{x}$ is sufficiently small then: $$\mathscr{F}:=\bigcup_{0\leq t\leq\tau(x,t)(y),\,\,y\in\mathcal{V}_x}\mathcal{P}_{F}^{t}(x)(y)$$  is a thin flowbox with base $\mathcal{V}_x$ and top $\mathcal{P}_{F}^{\tau(x,t)(y)}(x)(y)$. Due to the conservative flow-box Theorem (see \cite[Lemma 3.4]{Bessa_ETDS}), with a volume-preserving change of coordinates, we can trivialize $\mathscr{F}$ into a cylinder and thus we may obtain an expression for the global map $\Psi_{1,2}: Out (\sigma_1)  \rightarrow In (\sigma_2)$. Next lemma is the conservative version of a result stated in Bykov \cite{Bykov} and its proof directly follows by applying \cite[\S3]{Bessa_ETDS} and \cite[\S2.2]{BessaRocha}: 

\begin{lemma} \label{Perturb}
There are a volume-preserving change of coordinates in $Out (\sigma_1)$ such that the linear part of the global map $\Psi_{1,2}$ has the form:
$$\Psi_{1,2}(x,y)=\begin{pmatrix} a & 0\\ 0 & \frac{1}{a}\end{pmatrix} \begin{pmatrix} x\\ y\end{pmatrix} $$ for some $a\geq 1$. 
\end{lemma}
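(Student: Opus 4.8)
The plan is to reduce the statement to a normal-form computation for a single area-preserving linear map. First I would invoke the conservative flow-box theorem \cite[Lemma 3.4]{Bessa_ETDS} to straighten the flow along the fragile connection $[\sigma_1\to\sigma_2]$ by a volume-preserving change of coordinates, trivialising the flow tube joining $Out(\sigma_1)$ to $In(\sigma_2)$ into a cylinder exactly as in the construction of $\mathscr{F}$ above. The global map $\Psi_{1,2}$ is then realised as the transition map between these two transverse sections, and it sends the origin of $Out(\sigma_1)$, where $W^u(\sigma_1)$ meets the section, to the origin of $In(\sigma_2)$, where $W^s(\sigma_2)$ meets it, because the connection is itself a flow line joining these two points.

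Next I would show that $\Psi_{1,2}$ preserves an area form. Since $F$ is divergence-free, the Generalised Stokes Theorem forces the transition map of the flow between any two transverse sections to preserve the area form obtained by contracting the volume form with $F$; this is precisely the mechanism behind Lemma \ref{area-preserving}. Following \cite[\S2.2]{BessaRocha} I would then select coordinates on $Out(\sigma_1)$ and $In(\sigma_2)$ in which this induced area form is the standard one, so that $\Psi_{1,2}$ becomes genuinely area-preserving and its linear part $A:=D\Psi_{1,2}(0)$ lies in $\SL(2,\mathbb{R})$.

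It then remains to bring $A$ to the stated diagonal shape. Using the singular value decomposition I would write $A=U\,\mathrm{diag}(a,1/a)\,V^{T}$ with $U,V\in\So$ and $a\geq 1$, the two singular values being reciprocal because $\det A=1$. The key point is that this is a \emph{bi-rotation} rather than a conjugation: since $U$ and $V$ act on the target and source respectively, the normal form $\mathrm{diag}(a,1/a)$ is available for \emph{every} $A\in\SL(2,\mathbb{R})$, whatever its eigenstructure. As rotations are area-preserving, the factors $U$ and $V$ correspond to admissible volume-preserving changes of the polar coordinates fixed in Subsection \ref{linearization}: absorbing $V$ into the angular origin of $Out(\sigma_1)$ and $U$ into the (a priori free) angular origin of $In(\sigma_2)$ leaves $\Psi_{1,2}$ with linear part $\mathrm{diag}(a,1/a)$, as claimed.

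The main obstacle is the second step, namely descending invariance of the three-dimensional volume form to invariance of a two-dimensional area form on the cross sections, uniformly along the connection; this is where the flow-box theorem together with the Stokes argument of \cite[\S3]{Bessa_ETDS} does the real work, and one must verify that the sections stay transverse to the straightened flow throughout the tube. Once area-preservation is secured, the diagonalisation is routine $\SL(2,\mathbb{R})$ linear algebra. I would also emphasise that $a$ is a genuine invariant of the connection — the transverse expansion rate along $[\sigma_1\to\sigma_2]$ — precisely because the admissible coordinate changes are area-preserving isometries; permitting an arbitrary $\SL(2,\mathbb{R})$ change on the source alone would turn $a$ into a free normalisation and strip it of the geometric meaning it carries in the construction of the return map.
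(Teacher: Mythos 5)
Your proposal is correct and follows essentially the same route as the paper, whose entire proof consists of invoking the conservative flow-box theorem of \cite[Lemma 3.4]{Bessa_ETDS} to trivialize the flow tube along $[\sigma_1\to\sigma_2]$ and the area-preservation of conservative transition maps from \cite[\S 2.2]{BessaRocha} (the Stokes argument that reappears in the paper as Lemma \ref{area-preserving}). Your only additions are to spell out the final linear-algebra step --- the $\SL(2,\mathbb{R})$ singular value decomposition $A=U\,\mathrm{diag}(a,1/a)\,V^{T}$ with the rotations $U,V$ absorbed into the angular origins of the two sections --- and the pertinent observation that the admissible changes of coordinates must be restricted to such rotations for $a$ to be a genuine invariant rather than a free normalisation, both of which the paper leaves implicit behind its citations.
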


Consider the transition maps:
$$
\Psi_{1,2}:Out(\sigma_1) \longrightarrow In(\sigma_2)
\qquad 
\text{and} \quad \Psi_{2,1}:Out(\sigma_2) \longrightarrow In(\sigma_1).
$$

By Lemma ~\ref{Perturb}, the first order (linear) approximation of $\Psi_{1, 2}$, up to a change of coordinates, it is equal to:

\begin{equation}
 \Psi_{1,2}(x,y)=\left(ax, \frac{1}{a}y\right)
\qquad a \geq 1.
 \label{transition}
 \end{equation}

The transition $\Psi_{2,1}$ can be seen as a rotation by an angle $\alpha(\lambda)$.
As in \cite{LR3, Rodrigues2}, we assume that $\alpha \equiv \frac{\pi}{2}$, simplifying our computations.

\subsection{Local maps near the saddles}

The flow is transverse to the previous cross sections and the boundaries of $V_1$ and of $V_2$ may be written as the closures of the disjoint unions $In(\sigma_1) \cup Out (\sigma_1)$ and  $In(\sigma_2) \cup Out (\sigma_2)$, respectively. The trajectory of  the point $(x,y)$ in $In(\sigma_1) \backslash W^s(\sigma_1)$ leaves $V_1$ at $Out(\sigma_1)$ at:
$$
\begin{array}{c}
\Phi_{1}(x,y)=\left(\sqrt{|y|},-g_1 \ln y+x\right)=(r,\varphi),
\end{array}
$$
where $g_1=\frac{\alpha_{1 }}{E_{1}}>0$. In a dual way, points $(r,\varphi)$ in $In(\sigma_2) \backslash W^s(\sigma_2)$ leave $V_2$ at $Out(\sigma_2)$ at:
\begin{equation}
\begin{array}{c}
\Phi_{2}(r,\varphi )=\left(-g_2\ln
r+\varphi,r^{2}\right)=(x,y),
\end{array}
\end{equation}
where $g_2=-\frac{\alpha_{2 }}{E_{2 }}<0$. 
\begin{remark}
The  minus sign in the equation $\dot{\theta}= - \alpha_{2}$ of  \eqref{local2} suggests the condition stated on hypothesis \textbf{(H4)} about different chirality.
\end{remark}

\subsection{Global return map}
\label{Geom_vw}
Now we describe the geometry associated to the local dynamics near each equilibrium. We start with some useful definitions adapted from \cite{ALR, LR3}. 
A continuous real valued map defined on an interval $[0,1]$ is \emph{quasi-monotonic} if it is monotonic for a subinterval $[0,a]\subseteq [0,1]$ where $a\leq 1$.

\begin{definition} We shall define three objects to be used in the sequel: segments, spirals and helices:
\begin{enumerate}
\item \label{def_segmento} A \emph{ segment} $\beta $ 
 in  $In(\sigma_1)$ or $Out(\sigma_2)$ is a smooth regular parametrized curve of the type 
$$\beta :[0,1]\rightarrow In(\sigma_1) \qquad \text{or} \qquad \beta :(0,1]\rightarrow Out(\sigma_2)$$ that meets $W^{s}_{loc}(\sigma_1 )$ or $W^{u}_{loc}(\sigma_2 )$ transversely at the point $\beta (0)$ only and such that, writing $$\beta (s)=(x(s),y(s)),$$
both $x$ and $y$ are quasi-monotonic and bounded functions of $s$ and $\frac{dx}{ds}$ is bounded.
\bigbreak

\item \label{def_spiral}
A \emph{spiral} in $Out(\sigma_1)$ (resp.: $In(\sigma_2)$)  around  a point $p\in Out(\sigma_1)$(resp.: $p\in In(\sigma_2))$ is a curve 
$$\alpha :(0,1]\rightarrow Out(\sigma_1) \qquad \text{or} \qquad \alpha :(0,1]\rightarrow In(\sigma_2)$$ 
satisfying $\lim_{s\to 0^+}\alpha (s)=p$ and such that, if 
$\alpha (s)=(\alpha _{1}(s),\alpha _{2}(s))$ are its expressions  in
polar coordinates $(\rho ,\theta )$ around $p$, then 
$\alpha _{1}$ and $\alpha _{2}$ are quasi-monotonic, with $\lim_{s\to 0^+}|\alpha _{2}(s)|=+\infty$, i.e., winds infinitely many times around the point $p$.
\bigbreak
\item  Consider a cylinder $C$ parametrized by a covering $(\theta,h )\in  \mathbb{R}\times[a,b]$,
with $a<b\in\mathbb{R}$ where $\theta $ is periodic.
A \emph{helix} in the cylinder $C$ 
\emph{accumulating on the circle} 
$h=h_{0}$ is a curve
$\gamma :[0,1]\rightarrow C$
such that its coordinates $(\theta (s),h(s))$ 
satisfy $$ \lim_{s\to 0^+}h(s)=h_{0} \qquad \text{,} \qquad\lim_{s\to 0^+}|\theta (s)|=+\infty$$ and the map $h$ is quasi-monotonic.
\end{enumerate}
\end{definition}

 The next result summarizes some basic ideas on the spiralling geometry of the flow near the saddle-foci and has been proved in \cite[Section 6]{ALR}.  Since in its original form the authors assumed implicitly that the Property \textbf{(H4)} does not hold, we make the necessary reformulations.

\begin{lemma}
\label{Structures}
A segment $\beta $:
\begin{enumerate}
\item  
in $In(\sigma_1)$ is mapped by $\Phi _{1}$ into  a spiral in $Out(\sigma_1)$ around $W^u(\sigma_1)\cap Out(\sigma_1)$;
\item 
in $Out(\sigma_2)$ is mapped by $\Phi _{2}^{-1}$ into a spiral in $In(\sigma_2)$ around $W^s(\sigma_2)\cap In(\sigma_2)$;
\item \label{item3}
in $In(\sigma_1)$ is mapped by $\Phi _{1}$ into  a spiral in $Out(\sigma_1)$ around $W^u(\sigma_1)\cap Out(\sigma_1)$,
that is mapped by the conservative transition $ \Psi_{1,2}$  into another spiral, around $W^s(\sigma_2)\cap In(\sigma_1)$. This new spiral suffers the effect of the hyperbolic saddle distortion given by Lemma~\ref{Perturb}. 
\item If \textbf{(H4)} does not hold, the spiral defined on (\ref{item3}) is mapped by $\Phi _{2}$ into a helix in $Out(\sigma_2)$ accumulating uniformly on the circle  $Out(\sigma_2) \cap W^{u}(\sigma_2)$.
 \end{enumerate}
\end{lemma}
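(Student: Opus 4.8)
The plan is to verify each item by composing the explicit local maps $\Phi_1$, $\Phi_2$ and the linear transition $\Psi_{1,2}$ of Lemma~\ref{Perturb}, and then checking directly the defining properties: for a spiral, convergence to the base point, infinite angular winding, and quasi-monotonicity of the polar coordinates; for a helix, convergence of the height, infinite angular winding, and quasi-monotonicity of the height. For item (1) I write $\beta(s)=(x(s),y(s))$ with $y(0)=0$ and $y(s)>0$ for $s>0$; transversality of $\beta$ to $W^s_{loc}(\sigma_1)$ at $\beta(0)$ forces $y'(0)\neq0$, so $y(s)\sim y'(0)s$ near $0$. Then $\Phi_1(\beta(s))=(\sqrt{y(s)},-g_1\ln y(s)+x(s))$: the radius $\sqrt{y(s)}\to0$ (convergence to $W^u(\sigma_1)\cap Out(\sigma_1)$), while the angle $-g_1\ln y(s)+x(s)\to+\infty$ because $g_1>0$ and $x$ is bounded (infinite winding). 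Quasi-monotonicity is immediate for $\sqrt{y}$ and follows for the angle from $\theta'\sim -g_1 y'/y\sim-g_1/s\to-\infty$, so $\theta$ is monotone on a subinterval $[0,a']$. Item (2) is the time-reversed computation using $\Phi_2^{-1}(x,y)=(\sqrt{y},x+\tfrac{g_2}{2}\ln y)$, where now $g_2<0$ drives the angle to $+\infty$.

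For item (3) I would feed the spiral of item (1) into the linear hyperbolic map $(X,Y)\mapsto(aX,Y/a)$, $a\geq1$, of Lemma~\ref{Perturb}, which fixes the origin and therefore carries $W^u(\sigma_1)\cap Out(\sigma_1)$ to $W^s(\sigma_2)\cap In(\sigma_2)$ along the fragile connection. Being a linear isomorphism it is a homeomorphism of the punctured plane, so the infinite winding is preserved; in polar coordinates it acts on the angle by $\theta\mapsto\arctan(a^{-2}\tan\theta)$, which is monotone on each branch and maps multiples of $\pi$ to multiples of $\pi$, keeping the angular coordinate quasi-monotone and unbounded. The radius is multiplied by the bounded factor $\sqrt{a^2\cos^2\theta+a^{-2}\sin^2\theta}\in[a^{-1},a]$ and still tends to $0$. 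This distortion is exactly the one recorded in Lemma~\ref{Perturb}, and the image is a spiral in $In(\sigma_2)$ about $W^s(\sigma_2)\cap In(\sigma_2)$.

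Item (4) is where chirality enters. If \textbf{(H4)} fails, the rotation at $\sigma_2$ has the same orientation as at $\sigma_1$, which amounts to reversing the sign of $\dot\theta$ in \eqref{local2} and hence of the logarithmic term of $\Phi_2$; the relevant map is $\Phi_2(r,\varphi)=(\varphi+g_2\ln r,\,r^2)$, and since $g_2<0$ we have $g_2\ln r\to+\infty$ as $r\to0^+$. Applying it to the spiral of item (3), with $r(s)\to0$ and $\varphi(s)\to+\infty$, the height $y(s)=r(s)^2\to0$ monotonically, so the image accumulates uniformly on the circle $Out(\sigma_2)\cap W^u(\sigma_2)=\{y=0\}$, while the cylinder coordinate $x(s)=\varphi(s)+g_2\ln r(s)$ is a sum of two terms both tending to $+\infty$; they reinforce precisely because the chirality is equal, so $|x(s)|\to+\infty$. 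Together with the quasi-monotone height, this exhibits the image as a helix accumulating on $\{y=0\}$.

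The main obstacle is item (3): one must confirm that the spiral structure genuinely survives the hyperbolic distortion $\Psi_{1,2}$, where the factor $\sqrt{a^2\cos^2\theta+a^{-2}\sin^2\theta}$ injects oscillation into the radius and forces quasi-monotonicity to be read in the reformulated sense of \cite{ALR}. A second delicate point, feeding into item (4), is keeping precise enough control of the relation $\varphi\sim-2g_1\ln r$ between the angular and radial coordinates of the incoming spiral — both behaving logarithmically — to guarantee that the two unbounded contributions to $x(s)$ genuinely add up when the chirality is the same, rather than partially cancel when it is different; this cancellation is exactly the mechanism that hypothesis \textbf{(H4)} switches off and which underlies the main theorems.
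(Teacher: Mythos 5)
Your verification is essentially correct, but note that the paper does not prove this lemma at all: it simply cites \cite[Section 6]{ALR} (where the statement is established in the setting in which \textbf{(H4)} fails and the transition map carries no hyperbolic distortion) and asserts that ``the necessary reformulations'' have been made. Your argument is therefore a genuinely different route --- a self-contained verification obtained by composing the explicit formulas for $\Phi_1$, $\Phi_2^{-1}$ and the linear map of Lemma~\ref{Perturb} and checking the defining properties of spirals and helices. This buys two things the citation leaves invisible. First, it isolates exactly how the distortion $a\neq 1$ acts: the radius is multiplied by the bounded factor $\sqrt{a^2\cos^2\theta+a^{-2}\sin^2\theta}\in[a^{-1},a]$ and the angle transforms by $\theta\mapsto\arctan(a^{-2}\tan\theta)$, which is precisely the $C(\varphi)$, $\Phi(\varphi)$ structure the paper uses later in Lemma~\ref{x,y}; so your proof makes the lemma consistent with the computations it is meant to feed. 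Second, it exposes the sign mechanism behind item (4): with equal chirality the two logarithmic contributions to the angular coordinate in $Out(\sigma_2)$ reinforce (helix), while with different chirality they compete --- the cancellation that \textbf{(H4)} exploits and that seeds the reversals of Theorem~\ref{Main1}. You also correctly read the base point of the spiral in item (3) as $W^s(\sigma_2)\cap In(\sigma_2)$, silently fixing a typo in the statement.

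One correction of wording rather than substance: in item (4) you claim the height $y(s)=r(s)^2$ tends to $0$ \emph{monotonically}. When $a\neq 1$ this is false in general: the height is $s\,C(\varphi(s))$ with $C$ oscillating in $[a^{-2},a^2]$, its derivative equals $C(\varphi)-g_1C'(\varphi)$, and since $C'$ attains $\pm\left(a^2-\frac{1}{a^2}\right)$, this derivative changes sign on every neighbourhood of $s=0$ once $g_1\left(a^2-\frac{1}{a^2}\right)$ exceeds $\frac{1}{2}\left(a^2+\frac{1}{a^2}\right)$ --- exactly the non-monotonicity visible in Figure~\ref{maple1}(a). What survives is the two-sided bound $a^{-2}s\le y(s)\le a^2 s$, which yields the \emph{uniform} accumulation on the circle $\{y=0\}$ asserted in the lemma, with quasi-monotonicity holding only in the relaxed sense you yourself invoke in your closing paragraph. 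Since you flag this as the delicate point, the proof stands once that one sentence is corrected.
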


The complete study where hypothesis \textbf{(H4)} does not hold has been done in \cite{LR} for dissipative systems.

\section{Proof of Theorem \ref{Main1}: the reversals property and primary tangencies}
\label{SectionT-pointHetero}

We assume that $W^u(\sigma_2) \cap In(\sigma_1)$ and $W^s(\sigma_1) \cap Out(\sigma_2)$ define vertical lines across the cylinder walls of $V_1$ and $V_2$, $In(\sigma_1)$ and $Out(\sigma_2)$, respectively. 
Let $\beta(s)=(x_1(s), y_1(s))=(0,s) \subset In(\sigma_1)$, $s \in [0,1]$, be a parametrization of $W^u(\sigma_2) \cap In(\sigma_1)$, where $(0,0)$ are the coordinates of the point $[\sigma_2 \rightarrow \sigma_1] \cap In(\sigma_1)$. Let $(x_2(s), y_2(s))$ be the coordinates of $\Phi_{2 }\circ \Psi_{1,2 }\circ \Phi_{1 }(\beta(s))\in Out(\sigma_2)$. 
 The set $\Phi_{2} \circ \Psi_{1,2} \circ \Phi_{1}(\beta(s))$ ($s \neq 0$) defines an oriented curve in $Out(\sigma_2)$. Under hypothesis \textbf{(H4)}, it changes the direction of its turning around the cylinder $Out(\sigma_2)$ at infinitely many points where it has a vertical tangent. In what follows, we obtain the expression for $\Phi_{2} \circ \Psi_{1,2} \circ \Phi_{1 }(\beta(s))$, $s \neq 0$. The proof runs along the same lines as in \cite{LR3}.

\begin{lemma}
\label{x,y}
Let $\beta(s)=(0, s)$, $s \in [0,1]$, be a segment in $In(\sigma_1)$ parametrized by $s$. Then, for $s \neq 0$:

\begin{equation}\label{xweyw}
\left\{
\begin{array}{l}
x_2(s)= 
-\frac{1}{2} g_2 (\ln s [C(\varphi)])+ \Phi(\varphi)\\
 \\
y_2(s) = 
 s [C(\varphi)]\\
\end{array}
\right.
\end{equation}
where
\begin{equation}\label{CePhi}
\varphi(s)=-g_1 \ln s, \quad C(\varphi)= a^2 \cos^2(\varphi) + \frac{1}{a^2}  \sin^2(\varphi) \quad
\end{equation}
and 
$$
\\ \quad \Phi(\varphi)=\arg \left( c_1 s^{\delta_1}\left(a\cos(\varphi),\frac{1}{a} \sin(\varphi)\right) \right)=
 \arg \left( a\cos(\varphi),\frac{1}{a} \sin(\varphi)\right),
$$
with the argument $\arg$  taken in the  interval $\left[\frac{k}{2}, \frac{(k+1)}{2}\right]$, $k \in \mathbb{Z}$, that contains $\varphi$.
\end{lemma}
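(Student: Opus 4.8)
The plan is to evaluate the composition $\Phi_2\circ\Psi_{1,2}\circ\Phi_1$ on the vertical segment $\beta(s)=(0,s)$ one map at a time, switching between polar and Cartesian coordinates precisely where each transition is linear. First I would apply the local map near $\sigma_1$: since $\Phi_1(x,y)=(\sqrt{|y|},-g_1\ln y+x)$, evaluating at $(0,s)$ produces the point of $Out(\sigma_1)$ with polar coordinates $(r,\varphi)=(\sqrt{s},-g_1\ln s)$, which already identifies $\varphi(s)=-g_1\ln s$ as in \eqref{CePhi}.

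Next I would push this point through the transition $\Psi_{1,2}$. By Lemma~\ref{Perturb} and \eqref{transition}, in suitable coordinates $\Psi_{1,2}$ is, to first order, the diagonal map $(x,y)\mapsto(ax,\tfrac1a y)$, so it is most naturally applied in Cartesian coordinates. Writing the point above as $(\sqrt{s}\cos\varphi,\sqrt{s}\sin\varphi)$ and applying $\mathrm{diag}(a,\tfrac1a)$ gives $(a\sqrt{s}\cos\varphi,\tfrac1a\sqrt{s}\sin\varphi)\in In(\sigma_2)$. Reverting to polar coordinates, the modulus is $\sqrt{s}\,\sqrt{a^2\cos^2\varphi+\tfrac1{a^2}\sin^2\varphi}=\sqrt{s}\,\sqrt{C(\varphi)}$ and the argument is $\arg\!\big(a\cos\varphi,\tfrac1a\sin\varphi\big)=\Phi(\varphi)$; here the positive scalar $\sqrt{s}$, as well as any positive radial factor $c_1 s^{\delta_1}$ coming from higher-order saddle distortion, is immaterial to the angle, which is why only $\mathrm{diag}(a,\tfrac1a)$ enters $\Phi(\varphi)$. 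This yields exactly the quantities $C(\varphi)$ and $\Phi(\varphi)$ of \eqref{CePhi}.

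Finally I would apply $\Phi_2(r,\varphi)=(-g_2\ln r+\varphi,\,r^2)$ to the point of $In(\sigma_2)$ with polar coordinates $\big(\sqrt{s}\sqrt{C(\varphi)},\Phi(\varphi)\big)$. The first component becomes $-g_2\ln\!\big(\sqrt{s}\sqrt{C(\varphi)}\big)+\Phi(\varphi)=-\tfrac12 g_2\ln\!\big(s\,C(\varphi)\big)+\Phi(\varphi)$ and the second becomes $\big(\sqrt{s}\sqrt{C(\varphi)}\big)^2=s\,C(\varphi)$, which are precisely $x_2(s)$ and $y_2(s)$ in \eqref{xweyw}. The algebra of the three compositions is thus entirely routine.

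The one delicate point, and the part I expect to require care, is the choice of branch for $\arg$ in the definition of $\Phi(\varphi)$, encoded by the prescription ``the interval $[k/2,(k+1)/2]$ containing $\varphi$''. The task is to confirm that this branch keeps $\Phi$ single-valued and the image curve continuous along $\beta$. Since $\mathrm{diag}(a,\tfrac1a)$ with $a\geq1$ preserves each open coordinate quadrant and fixes the quadrant endpoints, the angle $\Phi(\varphi)$ never leaves the quarter-turn (an interval of length $\tfrac12$ in the normalised angular unit in which $\alpha\equiv\tfrac12$) determined by $\varphi$; tracking $\arg$ on that quadrant therefore makes $\Phi$ continuous and increments $k$ consistently across successive quadrants. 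This same branch structure, with $\Phi$ pinned at the quadrant endpoints while $\varphi(s)\to+\infty$ as $s\to0^{+}$, is the mechanism producing the infinitely many turning reversals of the curve in $Out(\sigma_2)$ announced before the lemma.
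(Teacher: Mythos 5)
Your proposal is correct and coincides with the paper's intended argument: the paper gives no proof of Lemma~\ref{x,y} beyond the remark that it ``runs along the same lines as in \cite{LR3}'', and that argument is exactly your routine composition of $\Phi_{1}$, $\Psi_{1,2}$ and $\Phi_{2}$ along $\beta$, passing to Cartesian coordinates where the linear transition $\operatorname{diag}\bigl(a,\tfrac1a\bigr)$ acts and back to polar coordinates before applying $\Phi_{2}$, with the positive scalar factors dropping out of the argument. Your handling of the branch of $\arg$ (quadrant preservation and fixing of the axis directions by $\operatorname{diag}\bigl(a,\tfrac1a\bigr)$) is also the correct reading of the interval prescription in the statement, and it is precisely what is used later in Part \textbf{(III)} of the proof of Theorem~\ref{Main1}.
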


From now on, denote by $\eta$ the following map $$\Phi_{2 }\circ \Psi_{1,2 }\circ \Phi_{1 }: In(\sigma_1) \rightarrow Out(\sigma_2).$$

\begin{figure}
\begin{center}
\includegraphics[height=7cm]{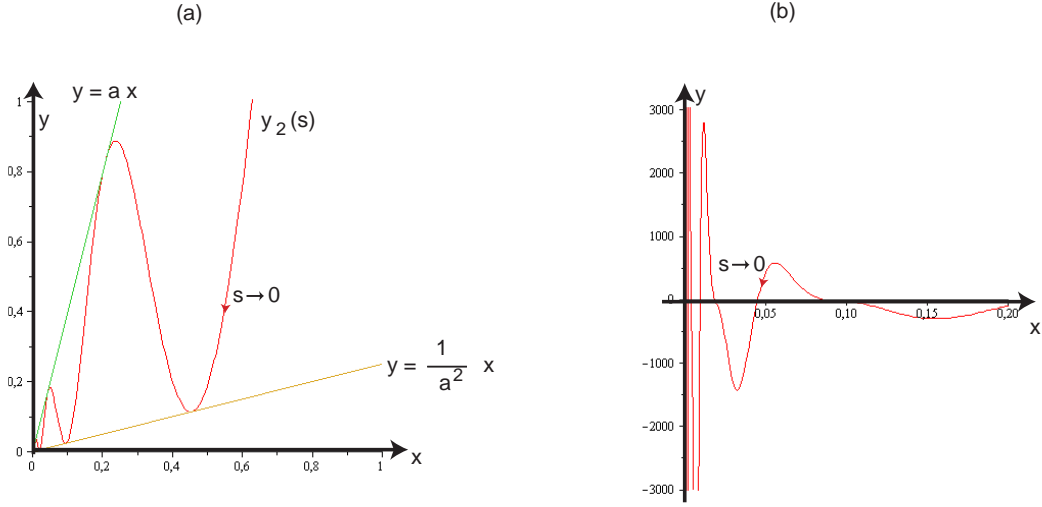}
\end{center}
\caption{\small Graphs of (a): the coordinate $y_2(s)$ (height) and (b); the coordinate $x_2(s)$ (angular coordinate), with $a \neq 1$, using Maple. Parameters: $\alpha_1=\alpha_2=1$, $a=E_1=2$ and $C_1=E_2=1$.}
\label{maple1}
\end{figure}

\subsection{Degenerate case: $a=1$}
If $a=1$, then $C(\varphi)=1$ and $y_2(s)=s$. Moreover,
$$
x_2(s)= 
- \frac{1}{2}g_2 (\ln s)+ \varphi=- \frac{1}{2} g_2 (\ln s) -g_1 \ln (s)=\left(-\frac{1}{2}g_2-g_1\right)\ln(s)
$$

If $g_2=-2g_1$ as in the reversible case reported in \cite{KLW, LTW}, then $x_2$ is constant and the segment $\beta$ is mapped under $\eta$ into a vertical segment.

\subsection{General case: $a \neq 1$}
As suggested in Figure \ref{maple1}(a), if $a \neq 1$, then the coordinate $y_2$ is not a monotonic function of $s$ but $\lim_{s \rightarrow 0^+} y_2(s)=0$. Moreover, for any $a \neq 0$, we have:
$$\text{if} \qquad  -\frac{1}{2}g_2-g_1> 0 \qquad \text{then}\quad \lim_{s \rightarrow 0^+} x_2(s)=+\infty $$ {and} \qquad $$\text{if} \qquad -\frac{1}{2}g_2-g_1<0 \qquad \text{then} \qquad \lim_{s \rightarrow 0^+} x_2(s)=-\infty.$$

\bigbreak

Under hypotheses \textbf{\textbf{(H1)}}--\textbf{(H5)}, we show that the coordinate map 
$x_2$ is not a monotonic  function of $s$, since the curve $\eta \circ \beta$ reverses the direction of its turning around $Out(\sigma_2)$ infinitely many times -- see Figure \ref{maple1}(b). This is the notion suggested by the following definition.

\begin{definition}
We say that the vector field $F$ has the \emph{dense reversals property} if for the vertical segment $$\beta(s)=(0, s)\in In(\sigma_1),\qquad s \in [0, 1],$$ the projection into $W^u_{loc}(\sigma_2)$ of the points where $\eta\circ \beta$ has a vertical tangent is dense in $W_{loc}^u(\sigma_2)\cap Out(\sigma_2)$.
\end{definition}

The dense reversals property is the key step in the proof of Theorem \ref{Main1}.
In order to prove it, we use the assumption \textbf{(H5)} on  the parameters $P=\left(\alpha_1, C_1, E_1, \alpha_2, C_2, E_2\right)$ 
that determine the linear part of the vector field $F$ at the equilibria.

 \begin{figure}\begin{center}
\includegraphics[height=5cm]{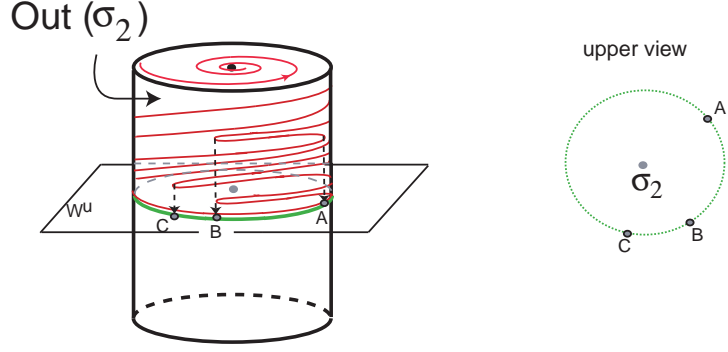}
\end{center}
\caption{\small On the cross section $Out(\sigma_2)$, we observe the line $W^u(\sigma_2)$ reversing the orientation of its angular coordinate. The set $W_{loc}^u(\sigma_2)\cap Out(\sigma_2)$ is diffeomorphic to a circle.}
\label{reversions}
\end{figure}

\begin{proof}[Proof of Theorem~\ref{Main1}] The proof will be divided into five parts, \textbf{(I)} -- \textbf{(V)}. The first part of the proof runs along the same lines to that of \cite{LR3}, which gives an optimal condition for the existence of tangencies. Let $F\in\mathfrak{X}_{\mu, P}^1(M)$.
\medbreak
\textbf{(I)}  
We need to compute the coordinate $x_2(s)$ at the points where $\eta(\beta(s))$ has a vertical tangent. Differentiating the expression \eqref{xweyw} of Lemma \ref{x,y} with respect to $s$, we get:
$$
\varphi'(s)=-\frac{g_1}{s},
\qquad 
\text{and}
\qquad
c'(\varphi)= \frac{2g_1}{s}\left(a^2-\frac{1}{a^2}\right)\cos(\varphi)\sin(\varphi)
$$
and then
\begin{equation}\label{eqdxw}
\frac{dx_2}{ds}=-\frac{1}{s}\left[\frac{g_2}{2} + \frac{1}{C(\varphi)}
 \left(2g_1 g_2 \left(a^2-\frac{1}{a^2}\right)\sin\varphi \cos\varphi + g_1\right)\right].
\end{equation}
Therefore $\frac{dx_2}{ds}=0$ has solutions if and only if 
$$
\frac{g_2}{2} + \frac{1}{C(\varphi)}
 \left(2g_1 g_2 \left(a^2-\frac{1}{a^2}\right)\sin\varphi \cos\varphi + g_1\right)=0.
$$
Define now the $\pi$--periodic map:
\begin{equation}\label{AdePhi}
A(\varphi)= 
E_1 a^2 \cos^2\varphi + \frac{E_1}{a^2}\sin^2\varphi +
 \alpha_1 \left(a^2-\frac{1}{a^2}\right)\sin\varphi\cos\varphi.
\end{equation}
 
\medbreak
\textbf{(II)} Now we are going to prove that the left part of (\ref{eqdxw}) has a zero if and only if $A(\varphi)=-\frac{\alpha_1 E_2}{\alpha_2}.$ 
Indeed, the expression
$$
\frac{g_2}{2} + \frac{1}{C(\varphi)}
 \left(2g_1 g_2 \left(a^2-\frac{1}{a^2}\right)\sin\varphi \cos\varphi + g_1\right)=0.
$$
is equivalent to
$$
g_2C(\varphi) + 2g_1g_2\left(a^2-\frac{1}{a^2}\right)\sin(\varphi)\cos(\varphi) =-2g_1.
$$
Using the definition of $g_1$ and $g_2$, the last equality yields the expression:
$$
\frac{\alpha_2}{E_2} a^2 \cos^2(\varphi)+2 \frac{\alpha_1\alpha_2}{E_1E_2}\left(a^2-\frac{1}{a^2}\right)\sin(\varphi)\cos(\varphi)+\frac{\alpha_2}{E_2 a^2}\sin^2(\varphi)=-2\frac{\alpha_1}{E_1},
$$
which is the same as
$$
E_1a^2\cos^2(\varphi)+2\alpha_1\left(a^2-\frac{1}{a^2}\right)\sin(\varphi)\cos(\varphi)+\frac{E_1}{a^2}\sin^2(\varphi)=-\frac{\alpha_1E_2}{\alpha_2}.
$$

Therefore the differential equation $\frac{dx_2}{ds}=0$ has solutions if and only if
\begin{equation}
\label{max}
\min A(\varphi) \leq -\frac{\alpha_1 E_2}{\alpha_2}\leq \max A(\varphi),
\end{equation}
corresponding to the condition that defines $\mathscr{D}$.  If $\varphi_0 \in [0, \pi]$ is a solution of  (\ref{eqdxw}), then there are infinitely many given by
$\varphi=\varphi_0+n\pi,$
where  $n \in \mathbb{Z}$. Since $\varphi=-g_1 \ln s$, then 
$\frac{dx_2}{ds}=0$ has solutions $$s_n=s_0e^{- \frac{n\pi}{g_1}}, \qquad n=0, 1,2, \ldots$$
where $s_0= e^{- \frac{\varphi_0}{g_1}}$. See Figure \ref{reversions}.
\medbreak

\textbf{(III)} For any $s_0\in\mathbb{R}$ and $n=0, 1,2, \ldots$, we have
\begin{equation}\label{xwsn}
x_2(s_n)=x_2\left(s_0 e^{\frac{-n\pi}{g_1}}\right)=x_2(s_0)+n\pi \left(1-\gamma\right)
\qquad\mbox{for}\qquad
\gamma=\frac{\alpha_2}{\alpha_1}\frac{C_1}{E_2} .
\end{equation}

 \begin{figure}\begin{center}
\includegraphics[height=4.0cm]{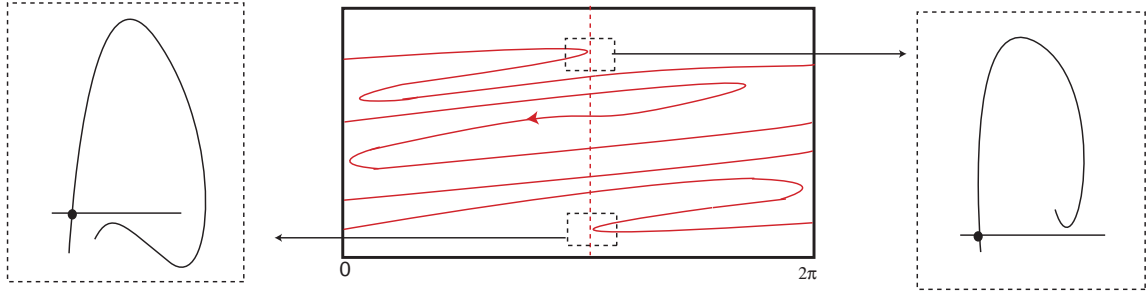}
\end{center}
\caption{\small On the cross section $Out(\sigma_2)$, we observe the $W^u(\sigma_2)$ reversing the orientation of its angular coordinate.  Two consecutive reversion points lie in region of the graph with different concavities, diving rise to different types of tangencies (and therefore to different types of dynamics). }
\label{Wall}
\end{figure}

Using the expressions \eqref{CePhi} we get that $$\varphi(s_n)=\varphi(s_0)+n\pi \qquad \text{and} \qquad
 C(\varphi+\pi)=C(\varphi).$$
Also, if $\varphi(s) \in \left[\frac{k}{2},\frac{(k+1)}{2}\right]$, then $\Phi(\varphi)$ lies in the same interval and 
$$\Phi(\varphi+1)=\Phi(\varphi)+1.$$
The result follows, using the expression \eqref{xweyw}  for $x_2(s)$ in Lemma \ref{x,y}. In Part \textbf{(III)}, we have shown that  $x_2\left(s_n\right)=x_2(s_0)+n\pi \left(1-\gamma\right).$
Hence, if the genericity condition $\gamma\notin\mathbb{Q}$ in \eqref{denseSet} holds, then the points $x_2(s_n)$ are dense in the circle defined by $W^u_{loc}(\sigma_2) \cap Out(\sigma_2)$ and thus $F$ has the dense reversals property.
With respect to the coordinate $x_2$, in the wall of the cylinder $V_2$, two consecutive reversion points lie in region of the graph with different concavities, as illustrated on Figure \ref{Wall}. This phenomenon implies the existence of chaotic dynamics near the tangency as we proceed to explain.

\medbreak

\textbf{(IV)}  If $x_0$ is  the projection of a point where  
$\eta\circ \beta$ has a vertical tangent, then  $W^s(\sigma_1)$ is tangent to $W^u(\sigma_2)$ at the corresponding point. Otherwise an arbitrarily small modification of the vector field $F$ in a neighbourhood of $(x_0,0)$ will move $x_0$, creating the tangency -- see Figure \ref{Cocoon}.  Suppose  $W^s(\sigma_1)\cap Out(\sigma_2)$ is a curve close to a vertical segment and that it is parametrized by  $$\xi(s)=(x(s),y(s)),$$with $\xi(0)=(x_0,0).$ 
Then,  changing  $R$ near $(x_0,0)$ if necessary, we may assume that $\xi(s)$ meets
$\eta \circ \beta$ at a point where the last curve  has a vertical tangent.
 The perturbation is achieved in part \textbf{(V)}.
\medbreak

\textbf{(V)} In ~\cite{LR3, LR2015} is it used an unfolding argument to obtain tangencies from the dense reversals property. Since we have no guarantee that a generic  unfolding process holds in the volume-preserving setting, we should tackle the problem using another approach. The key step is the volume-preserving Hayashi's Connecting Lemma \cite[\S1 and  Theorem E]{WX}, whose formulation may be adapted for flows as follows:

\begin{theorem}\cite[adapted]{WX}
\label{WeX}
Let $z\in M$ be a non-periodic solution of $F\in \mathfrak{X}_\mu^1(M)$. For any $C^1$-neighbourhood $\mathcal{U}\subset \mathfrak{X}_\mu^1(M)$ of $F$, there exists $\delta_0, L>0$ and $\rho>1$ such that for all $\delta\in(0,\delta_0]$ and any two points $p,q\in M$ outside the tube $\bigcup_{t\in[-L(\delta),0]}\varphi(t,(B(z,\delta))$, if the forward trajectory of $p$ intersects $B(z,\delta/\rho)$ and the backward trajectory of $q$ intersects $B(z,\delta/\rho)$, then there exists $Y\in\mathcal{U}$ such that $Y=F$ outside $\bigcup_{t\in[-L(\delta),0]}\varphi(t,(B(z,\delta))$ and there exists a one-dimensional connection $[p \rightarrow q]$.
\end{theorem}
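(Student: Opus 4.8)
The plan is to deduce Theorem~\ref{WeX} from the $C^1$ connecting lemma for volume-preserving systems of Wen and Xia \cite{WX}, the content of the theorem being an \emph{adaptation} of their result to flows and to the tube/connection language used here rather than a proof from scratch. The starting observation is that $z$ is a regular, non-periodic point of $F$, so the orbit segment we exploit is an embedded arc; I would fix a small codimension-one disk $S_z$ transverse to $F$ at $z$ and work with the transition map it induces, reducing the flow statement to Wen and Xia's statement for the associated area-preserving section map.

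Concretely, I would first straighten $F$ inside a tube around the orbit of $z$ by means of the conservative flow-box theorem already invoked in the paper \cite[Lemma 3.4]{Bessa_ETDS}: there is a volume-preserving change of coordinates carrying the tube $\bigcup_{t\in[-L(\delta),0]}\varphi(t,B(z,\delta))$ to a product $D\times[0,L(\delta)]$ on which $F$ becomes the constant field $\partial/\partial t$ and $\mu$ becomes Lebesgue measure. In these coordinates the orbits are horizontal segments, $S_z$ is a slice $D\times\{c\}$, and the induced transition map is an area-preserving diffeomorphism of $D$. I would then apply the lemma of \cite{WX} to this map: the hypotheses that the forward orbit of $p$ and the backward orbit of $q$ meet the inner ball $B(z,\delta/\rho)$ are exactly the iterate hypotheses of their connecting lemma, and produce a $C^1$-small, area-preserving perturbation of the transition map, supported in a bounded number of iterates of the $\delta$-ball on $S_z$, after which the forward $p$-orbit and the backward $q$-orbit coincide. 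The constant $\rho>1$ and the bound $L(\delta)$ on the length of the tube are precisely those furnished by \cite{WX}, and the non-recurrence of $z$ is what guarantees that the relevant iterates of $B(z,\delta)$ overlap only boundedly often.

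Finally I would lift this section perturbation to a divergence-free vector field $Y$ supported inside the tube. In the flow-box coordinates the perturbation of the section map is realized by an isotopy equal to the identity near $\partial D$; interpolating it along the $t$-direction yields a field whose time-$L(\delta)$ map is the perturbed transition map, and since the coordinates are volume-preserving and the interpolation is carried along the incompressible direction $\partial/\partial t$, one may take $Y$ divergence-free, lying in $\mathcal{U}$, and equal to $F$ outside the tube. The orbit of $p$ under $Y$ then reaches $q$, giving the one-dimensional connection $[p\to q]$.

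The main obstacle, as I see it, is the simultaneous control of three constraints in the lifting step: keeping $Y$ \emph{divergence-free}, keeping its $C^1$-distance to $F$ within $\mathcal{U}$, and keeping its support inside the prescribed tube. The area-preserving version of \cite{WX} secures the first on the section, but transporting incompressibility faithfully through the flow-box identification — and checking that the number of cells needed along the length $L(\delta)$ does not force the per-cell perturbation to grow in $C^1$-norm — is the delicate point, and it is exactly the role of the ratio $\rho$ to absorb this growth.
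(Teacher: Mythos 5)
A point of comparison first: the paper does not actually prove Theorem~\ref{WeX}. Its treatment is a citation: it invokes \cite[\S1 and Theorem E]{WX}, where Hayashi's $C^1$ connecting lemma is established in the conservative category, and presents the statement as an ``adapted'' formulation in the tube/connection language, offering no argument of its own. What you attempt instead is a genuine re-derivation of the flow statement from the diffeomorphism (section-map) statement, and that derivation has a real gap at its core.

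The gap is the step ``apply the lemma of \cite{WX} to this map''. The map version of the connecting lemma applies to volume-preserving diffeomorphisms of a compact manifold, whereas the object your reduction produces is the return map of the flow to a local section $S_z$: a partially defined map with unbounded return times near the boundary of its domain (and in the intended application $F$ has equilibria, so no global cross-section exists). Worse, the geometry is misidentified. Iterates of the $\delta$-ball under the return map are successive \emph{returns} of orbits to a neighbourhood of $z$; these return segments are spread over all of $M$ with uncontrolled flow times, and they are not the unit-time sub-boxes of the flow tube $\bigcup_{t\in[-L(\delta),0]}\varphi(t,B(z,\delta))$. Hence, even granting the application of the map lemma, lifting its perturbation (supported on several iterates of the ball) to a vector-field perturbation yields a field $Y$ differing from $F$ on a union of flow boxes around those return segments, not inside the prescribed tube; the support condition $Y=F$ outside $\bigcup_{t\in[-L(\delta),0]}\varphi(t,B(z,\delta))$ is lost, and it is exactly this condition that Part \textbf{(V)} of the proof of Theorem~\ref{Main1} uses to keep the perturbation disjoint from the fragile connection $[\sigma_1\rightarrow\sigma_2]$. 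This is the familiar obstruction that $C^1$ perturbation lemmas for flows do not follow formally from their diffeomorphism counterparts via time-one or return maps; Hayashi's spreading-and-selection argument must be carried out inside the flow tube itself, which is what Wen and Xia do. Your flow-box straightening and the realization of section-map perturbations by divergence-free fields (cf.\ \cite[Lemma 3.4]{Bessa_ETDS} and \cite[\S 3]{BessaRocha}) are standard and correct, but they are not where the difficulty lies. A minor slip as well: you appeal to ``non-recurrence of $z$'', but $z$ is only assumed non-periodic; what is actually used is that a finite orbit segment of a non-periodic point is injective, so the tube is embedded once $\delta$ is small.
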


The previous result says that if two distinct initial conditions $p$ and $q$ have solutions that visit a given neighbourhood of a point $z=\xi(0)$ and the initial conditions $p$ and $q$ are sufficiently far away from a piece of the backward solution of $z$, then we can find a divergence free vector field for which $p$ and $q$ are heteroclinically related. The support of the perturbation lies on $$\bigcup_{t\in[-L(\delta),0]}\varphi(t,(B(z,\delta)).$$
\medbreak
By \textbf{(I)--(III)},  we clearly have two points $p\in W^u(\sigma_2)$ and $q\in W^s(\sigma_1)$ in the conditions of Theorem \ref{WeX}. It is obvious that the solution associated of $z$ cannot intersect the heteroclinic connection $[\sigma_1\rightarrow \sigma_2]$. Therefore, we can always choose $\delta>0$ sufficiently small in order that the sets
$\bigcup_{t\in[-L,0]}\varphi(t,(B(z,\delta))$ and $[\sigma_1\rightarrow \sigma_2]$ are disjoint. Then, we can perform the volume-preserving perturbation given by Theorem~\ref{WeX} to obtain the tangencial intersection between $W^u(\sigma_2)$ and $W^s(\sigma_1)$ by making a connection near $z$. If the intersection is tangential the proof is completed. Otherwise, we consider a smooth  path $\{Z_s\}_{s\in[0,1]}$ of elements on $\mathfrak{X}_{\mu, P}^1(M)$ such that $Z_0=X$ and $Z_1=Y$. Since $Z_0$ displays no intersection between $W^u(Z_0,\sigma_2)$ and $W^s(Z_0,\sigma_1)$, $Z_1$ displays a transversal intersection between $W^u(Z_1,\sigma_2)$ and $W^s(Z_1,\sigma_1)$, and the accumulation is of a quadratic type, then there must be $s\in(0,1)$ such that $Z_s$ displays a tangencial intersection between $W^u(Z_s,\sigma_2)$ and $W^s(Z_s,\sigma_1)$.

\end{proof}

\section{Proof of Theorem \ref{Main2}: Generalized Cocooning cascade of heteroclinic tangencies}
\label{Cocoon_section}
The classic Cocoon bifurcation introduced in \cite{DIK, Lau} `begins' when the two-dimensional invariant manifolds of $\sigma_1$ and $\sigma_2$ have a tangency. After the first tangency, when moving slightly a given one-dimensional parameter, we obtain two structurally stable heteroclinic connections. It is the beginning of a sequence of heteroclinic bifurcations. Saddle-node bifurcations and elliptic solutions have been detected numerically in \cite{KE, Lau, Webster}.

\begin{proof}[Proof of Theorem \ref{Main2}]
We would like to prove that, for any $F\in \mathfrak{X}_{\mu,P}^1(M)$, and any tubular neighbourhood $N_\Sigma$ of the Bykov cycle $\Sigma$, for every $L>0$,  there exists a closed two-dimensional torus $\mathbb{T}^2$ such that $\sigma_{1,2} \notin \mathbb{T}^2$ and $F$ can be $C^1$-approximated by $F^\star \in \mathfrak{X}_{\mu,P}^1(M)$ whose flow displays a tangency between $W^u(\sigma_2)$ and $W^s(\sigma_1)$, intersecting twice $\mathbb{T}^2$ and having length greater than $L$ within $\mathbb{T}^2$.

Theorem~\ref{Main1} shows that the vector field of (\ref{general}) can be $C^1$-approximated by another vector field such that the associated flow displays a primary tangency inside $N_\Sigma$ between the manifolds $W^u(\sigma_2)$ and $W^s(\sigma_1)$. Beyond the tangency, there are infinitely many pieces of $W^u(\sigma_2)$ accumulating on $W^u(\sigma_2)\cap Out(\sigma_2)$. Therefore the set
 $ \Psi_{1,2}\circ \eta  (W^u(\sigma_2))$
  accumulates on $W^u(\sigma_2)\cap In(\sigma_1)$.  Repeating recursively the argument of the proof of Theorem \ref{Main1} in the appropriate direction,  we get curves in $Out(\sigma_2)$ with the dense reversals property. Using again the
      $C^1$-Connecting Lemma as in \textbf{(V)}, we may construct vector field $F^\star \in \mathfrak{X}_{\mu,P}^1(M)$ such that its flow exhibits tangencies of order $n$, with $n \in \NN$ large, and then we may define a torus $\mathbb{T}^2$, not containing the saddle-foci, inside which heteroclinic tangencies of order $n$ spend most of the time (say larger than $L>0$). An illustration of this phenomenon has been given in Figure \ref{Cocoon}.
 \end{proof}

 The main tool for detecting the manifolds $W^u(\sigma_2)$ and $W^s(\sigma_1)$ is the time delay function, defined by:
$$
T_{\pm}(x_0)= \left|  \int_{r_0}^r {dt}\right|, \qquad ||x(0)||=r_0
$$
where $T_\pm$ corresponds to integrate along the trajectory $$x(t)=x_0 + \int_{0}^{\pm t} f(x(s),c)ds \quad \text{with}  \quad x \in {M}.$$
The time delay function measures how long it takes for a given solution to escape from the sphere of radius $r>0$. Trajectories starting at the regions between two consecutive lines of Figure \ref{Cocoon} (corresponding to heteroclinic tangencies) will turn one more time around the equilibria $\sigma_2$ - more details in \cite[Section 7]{Rodrigues2015}. This is why the map $T_\pm$ acquires a castle shape with a higher level between pairs of equilibria  of the time delay map.

In the present paper, we cannot use the arguments of \cite{Rodrigues2} in which Lebesgue almost all solutions leave $N_\Sigma$ and hence will escape from a given sphere centered on the origin, with radius $r$. The later paper uses explicitly that hypothesis \textbf{(H4)} does not hold. 
By detecting logarithmic equilibria  of this map, one locates the invariant manifolds. A typical time delay function has several singularities as the one depicted in \cite[Fig. 11]{Lau}, corresponding to the heteroclinic tangencies. 
The time delay function acts as a numerical mechanism, searching along lines in a cross section, for pieces of the invariant manifolds of the two nodes.  Heteroclinic connections form a skeleton for the structure of periodic and bounded aperiodic orbits. According to \cite{Lau}, when elliptic periodic solutions are detected, the time delay function has flat tops.  Numerical results have been presented in \cite[Section 4]{DIKS}. 
 The analytical existence of these solutions is the core of next section.

\section{Proof of Theorem \ref{corollaryPeriodDoubling}: Horseshoes and Elliptic solutions}
\label{Horseshoes_section}

The organizing centre for the dynamics may be characterized by the presence of a hyperbolic set which is flow-invariant,
 indecomposable and topologically conjugate to a
Bernoulli shift space with  finitely many symbols. The union of all of these sets is not uniformly
hyperbolic, and countably many
of these sets are destroyed under small generic perturbations.  Homoclinic cycles of Shilnikov type and subsidiary cycles are expected to occur \cite{LR}.

\subsection{The existence of topological horseshoes}
In this section we study the existence of elliptic fixed points for the map $R$. We will make use of the references \cite{MGonchenko, GS}. The next result will be useful to make the bridge between conservative vector fields and conservative  first return maps.  We thanks Santiago Ib\'a\~nez for the hints on the next proof. 
\begin{lemma}\label{return_map}
\label{area-preserving}
{The return map of a volume-preserving vector field to any transverse section $S$ preserves the area $\delta dS$ (for some density $\delta$), and so is an area-preserving diffeomorphism.}
\end{lemma}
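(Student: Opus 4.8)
The plan is to produce the invariant area explicitly as the restriction to $S$ of the two-form obtained by contracting the volume form with $F$, and then to verify its invariance under the return map by a direct differential computation. Let $\omega$ denote the volume-form on $M$ whose associated measure is $\mu$. Since $M$ is three-dimensional, $\omega$ is a top-degree form, so $d\omega=0$, and Cartan's formula gives $\mathcal{L}_F\omega = d\,i_F\omega$. The divergence-free hypothesis means precisely $\mathcal{L}_F\omega=0$, so the two-form $\beta:=i_F\omega$ is closed.

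First I would record two properties of $\beta$. Using the commutation identity $[\mathcal{L}_F,i_F]=i_{[F,F]}=0$ together with $\mathcal{L}_F\omega=0$, one gets $\mathcal{L}_F\beta = i_F\mathcal{L}_F\omega = 0$, so $\beta$ is flow-invariant: $\varphi(t,\cdot)^{*}\beta=\beta$ for all $t$. Secondly, $i_F\beta = i_F i_F\omega = 0$. Because $F$ is transverse to $S$, whenever $v,w\in T_xS$ span $T_xS$ the triple $(F(x),v,w)$ is a basis of $T_xM$, whence $\beta(v,w)=\omega(F,v,w)\neq 0$; thus $\beta|_S$ is a nondegenerate area-form on the surface $S$. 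Writing $\beta|_S=\delta\,dS$ with $dS$ the Riemannian area element of $S$ identifies the density $\delta$ in the statement.

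The core step is the invariance $R^{*}(\beta|_S)=\beta|_S$, and the only subtlety — which I expect to be the main obstacle — is that the return map is not a single time-$T$ map but $R(x)=\varphi(\tau(x),x)$ with a nonconstant return time $\tau$, so flow-invariance of $\beta$ does not apply verbatim. I would handle this by differentiating: writing $y=R(x)$ and $T=\tau(x)$, one has
\[
DR_x(v)=D\varphi(T,\cdot)_x\,v + \big(d\tau_x(v)\big)\,F(y),\qquad v\in T_xS .
\]
Evaluating $\beta_y$ on $DR_x(v)$ and $DR_x(w)$ and expanding by bilinearity, every term carrying a factor $F(y)$ contributes a contraction $i_F\beta=0$, and the remaining $F(y),F(y)$ term dies by antisymmetry; hence only $\beta_y\big(D\varphi(T,\cdot)v,\,D\varphi(T,\cdot)w\big)$ survives. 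By flow-invariance this equals $\beta_x(v,w)$, giving $R^{*}(\beta|_S)=\beta|_S$.

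Finally I would conclude that $R$ preserves the measure $\delta\,dS$; choosing local coordinates on $S$ in which this area-form is standard, the Jacobian of $R$ is identically $1$, so $DR$ lands in $\SL(2,\mathbb{R})$, exactly as required for Definition \ref{Elliptic}. This reduces the whole statement to the single vanishing $i_Fi_F\omega=0$, which is precisely what renders the variable return time harmless.
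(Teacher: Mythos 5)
Your proof is correct, and it reaches the lemma by a genuinely different route than the paper. The paper argues integrally: it fixes a disk $D$ in the section, forms the solid flow-box bounded by $D$, its image $P(D)$, and the lateral tube $\mathcal{T}$ swept out by the flow, and applies the Ostrogradsky--Stokes (divergence) theorem; since $\nabla\cdot F=0$ the total flux through the boundary vanishes, the flux through $\mathcal{T}$ vanishes because $F$ is tangent to it, and hence the flux of $F$ through $D$ equals that through $P(D)$ --- the preserved ``adapted'' measure being the flux measure. Your argument is the pointwise, differential-forms version of the same fact: your $\beta=i_F\omega$ restricted to $S$ is exactly that flux form $\delta\,dS$ with $\delta=\langle F,n\rangle$, so both proofs identify the same invariant density. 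What you do differently is, first, to prove invariance as an identity of $2$-forms, $R^{*}(\beta|_S)=\beta|_S$, rather than as an equality of integrals over arbitrary sub-disks (which spares you the choice of $D$, the orientation bookkeeping of outward normals, and any simple-connectedness hypothesis); and second, to isolate explicitly the one genuine subtlety that the paper's tube construction absorbs only implicitly --- the non-constancy of the return time $\tau$ --- killing the correction term $d\tau\otimes F$ through the algebraic identity $i_Fi_F\omega=0$. Your version buys precision: the density $\delta$ is exhibited, its nonvanishing on $S$ follows from transversality, and the conclusion $DR\in\SL(2,\mathbb{R})$ in adapted coordinates, which is precisely what Definition \ref{Elliptic} requires, drops out immediately. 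The paper's version buys elementarity: it needs only vector calculus, no Lie-derivative or Cartan calculus, at the cost of concluding somewhat informally that ``an adapted measure'' exists.
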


\begin{proof}
Let $F$ be a volume preserving vector field, \emph{i.e.} such that $\nabla \cdot  F = 0$. Take a transverse section $S$ with a well defined Poincar\'e map $P:S' \rightarrow S$, where $S' \subset S$. Now, take a simply connected subset $D \subset S'$ (assume it is a disk to make it simple). Consider the solid $M$ obtained by
saturation of the flow from $D$ to $P(D)$. The boundary of $M$, $\partial M$,  can be written as $\partial M = D \cup P(D) \cup \mathcal{T}$, 
where $\mathcal{T}$ is the saturation by the flow of the
one-dimensional boundary $B$ of $D \subset S'$ until it reaches $P(B)$.  By the Ostragadsky-Stokes Theorem \cite{Marsden, Webster}, the triple integral of $\nabla \cdot F$ on $M$ equals the surface integral of $f$ on $\partial M$. More formally:
$$
\iiint_M \nabla \cdot F= \iint_{\partial M} F. 
$$
Since $\nabla \cdot F=0$, the left part of the previous equality is zero. Moreover, $\int_\mathcal{T} F=0$ because $F$ is tangent to $\mathcal{T}$ at each point. Hence
$$
 \iint_D F + \iint_{P(D)} F=0.
$$

In the above expression, the normal vector on $D$ is the opposite to the normal vector on $P(D)$ because the normal vector must have a continuous variation on $\partial M$. By taking the normal vector pointing in the same direction the conclusion is that
$$
\iint_{P(D)} F=\iint_D F,
$$
meaning that there exists `an adapted' measure that is preserved (by $P$).
\end{proof}

\subsection{Suspended horseshoes and elliptic solutions}
\label{Suspended}
We will borrow the arguments of \cite{BDV,MGonchenko, GS} to show the existence of suspended horseshoes near the heteroclinic tangency. The latter authors compute the first return map $R$ and apply the Rescaling Lemma for studying bifurcations of fixed points.  The Rescaling Lemma  says that we may reduce the study of bifurcations of the first return maps to the classic analysis of bifurcations of the two-dimensional conservative H\'enon-like maps.

By \cite{GS}, we may find $M \in \mathbb{N}$ and a union of strips $\bigcup S_k \subset In(\sigma_1)$, $k>k_0 \in \NN$, accumulating on the segment $W^s(\sigma_1) \cap In(\sigma_1)$ as $k \rightarrow + \infty$ such that $R^m|_{\cup S_k}$ has a conservative horseshoe near the cycle, for $m>M$.   For small perturbations of $R$, infinitely many bifurcations of horseshoes creation or destruction occur, including the  birth and disappearance of elliptic periodic points. These horseshoe bifurcations must have different scenarios depending on a type of the initial tangency. The character of reciprocal position of the strips $S_k$ and their images $R(S_k)$ are essentially defined by the signs of two parameters that governs the linear part of the global map; the authors of \cite{GS} selected six different cases of  maps with quadratic homoclinic tangencies. For these parameters and for $k$ large, the position of all the strips $S_k$ and $R(S_k)$ have regular intersections. In particular,  if $F\in \mathfrak{X}_{\mu,P}^r(M)$, then the first return map associated to any vector field $C^1$-arbitrarily close to $F$ has a conservative horseshoe accumulating on $W^u(\sigma_2)\cap W^s(\sigma_1)$.  Moreover, there are persistent heteroclinic tangencies of the invariant manifolds of periodic solutions.

In a small section transverse to the cycle (not containing it), since the number of symbols of the Bernoulli shift approaches $\infty$ as the strip approaches the cycles, a cascade of bifurcations occurs, associated to the creation and annihilation of horseshoes \cite{YA}.  There are no guarantees that such perturbations still have the Bykov cycle in its flow.

\begin{remark}
If $A:\mathbb{R}^2 \rightarrow \mathbb{R}^2 $ is a linear map such that the $\det A=1$, then $\mathrm{Tr\ }A \in (-2,2)$ if and only if  its eigenvalues are complex (non-real) and conjugated. 
\end{remark}

\begin{proof}[Proof of Theorem \ref{corollaryPeriodDoubling}]
Let $F\in \mathfrak{X}_{\mu,P}^1(M)$.  Any neighbourhood of the points where we observe dense reversals property contains nontrivial hyperbolic subsets including infinitely many horseshoes, accumulating on the tangency. In particular, there are infinitely many 1-periodic solutions accumulating on the tangency.  Let $S\subset In(\sigma_1)$ and $$R=\Psi_{2,1}\circ\eta:S\rightarrow In(\sigma_1)$$ be the first return map of a vector field on $M$ satisfying (\textbf{H1})--(\textbf{H5}).
The points $(x(s),y(s))\in In (\sigma_1)$ for which we observe reversals satisfy
\begin{equation}
\label{tang_final}
A(\varphi)=\frac{\alpha_1 E_2}{\alpha_2}, \qquad \text{where} \qquad \varphi= -g_1\ln(s) \qquad \text{and} \qquad \det  DR(x,y)=1.
\end{equation}
 Observing that $1/a^2\le C(\varphi)\le a^2$ and $C(\varphi)$ is bounded, it follows that:
\begin{eqnarray*}
 \mathrm{Tr\ } DR(x,y)&=&
 2y\left(a^2-\frac{1}{a^2}\right) \sin\varphi\cos\varphi +
\frac{1}{y}\frac{\alpha_2}{E_1 E_2 C(\varphi)}\left( A(\varphi)-\frac{\alpha_1 E_2}{\alpha_2}
\right)=\\&=& o(y)+ \frac{1}{y}\frac{\alpha_2}{E_1 E_2 C(\varphi)}\left( A(\varphi)-\frac{\alpha_1 E_2}{\alpha_2}
\right)
\end{eqnarray*}

Near the initial conditions in $In(\sigma_1)$ where we observe the dense reversals property, condition (\ref{tang_final}) holds and thus, we conclude that $$\mathrm{Tr\ }DR(x,y)=o(y).$$ 

As illustrated in Figure \ref{trace}, for $y \approx 0$, 
if $\mathcal{A}$ is a neighbourhood in $Out(\sigma_2)$ of a point for which we observe a reversal, then there is a line where the trace is zero and that splits $\mathcal{A}$ into two connected components where the trace is positive or negative.
 In particular, we may define a strip $S\subset \mathcal{A}$ in which the trace belongs to the interval $(-2,2)$, where all 1-periodic solutions should be elliptic.  Intersecting the set of 1-periodic solutions that accumulate on the tangencies with the strip $S$, yields infinitely many elliptic 1-periodic solutions accumulating on the tangency.

 \begin{figure}\begin{center}
\includegraphics[height=3cm]{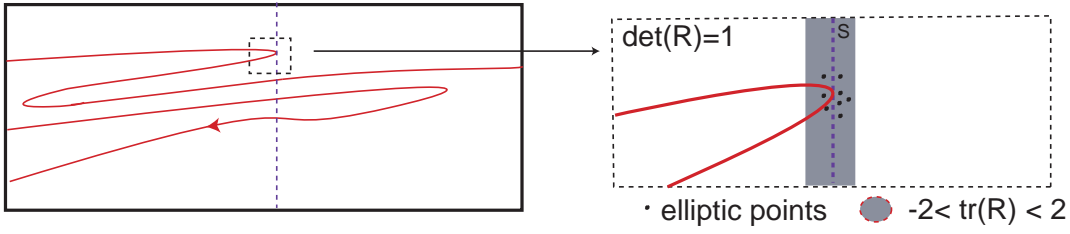}
\end{center}
\caption{\small Intersecting the set of 1-periodic solutions that accumulate on the tangencies with the strip $S$ yields infinitely many elliptic 1-periodic solutions accumulating on the tangency. }
\label{trace}
\end{figure}
 \end{proof}

\subsection{The Dynamics}
Dynamics near an elliptic point is reminiscent of the dynamics near a center type equilibrium: under regularity assumptions on the vector field, a generic elliptic point is surrounded by a family of closed KAM-curves which occupy a set of almost full Lebesgue measure in a small neighbourhood of the fixed point. On each KAM-curve the map $R$ is conjugate to an irrational rotation. As the distance to the fixed point increases, the rotation number changes in a monotonic way. The KAM-curves do not fill the neighbourhood of the elliptic point. Rational rotation numbers correspond to resonant Birkhoff zones between the closed curves and the dynamics in these regions is typically chaotic. This gives rises to chaotic dynamics between tori regions coexisting with hyperbolic horseshoes and the Bykov cycle. Adding the reversibility to the problem, symmetric elliptic trajectory may be seen as the limit of other elliptic periodic points -- see Gonchenko \emph{et al} \cite{GRios}. 

 When the fragile connection is broken, near the Bykov cycle there are a wide range of curious homoclinic bifurcations as reported in \cite{LR}.  Some of them do not depend on the chirality and are generated by the swirling pattern due to the existence of saddle-foci. They can be viewed as spiralling structures in a parameter diagram. Conservative structures might be comprised of elliptic islands separated by saddles. There is another self-similar organization of embedded saddles and centers on smaller scales. The closer one approaches a center of the rings, the greater the number of twists the outgoing heteroclinic connection makes before returning to the saddle -- see \cite{DIKS}.

\section{Discussion and concluding remarks}

In this paper, trying to answer the questions \textbf{(Q1)}--\textbf{(Q2)}, we have proved that the extended version of the Cocoon Bifurcations for conservative systems, the heteroclinic tangencies and the elliptic periodic solutions are strongly related. More precisely, in the present paper we have considered divergence-free vector fields whose flows have Bykov cycles, we have found heteroclinic tangencies leading to the description of elliptic periodic solutions for the first return map to a cross section defined near the tangency. Within a codimension 1 set in $\mathfrak{X}_{\mu}^1(M)$, tangencies between the invariant manifolds of the two saddle-foci densely occur. 

The proof of the existence of elliptic solutions near the horseshoes does not run along the same lines of Gonchenko and Shilnikov \cite{GS}, in which the authors used renormalisation to the H\'enon map. Along the proof of our results, we use the topological notion of chirality, making our findings   different from those of \cite{KLW, LR}. We asked for the $C^1$ volume-preserving Connecting Lemma \cite{WX} to perform local perturbations. Similar heteroclinic bifurcations have been considered in \cite{KLW, LTW}, where generic flat perturbations do not preserve the cycle.

The non-wandering dynamics near the Bykov cycle is dominated by hyperbolic horseshoes, conjugate to a full shift over a finite alphabet, that accumulate on the cycle. Hyperbolic behaviour cannot be separated from the ellitic behaviour by isotopies.

\subsection{The Conservative setting}
Our underlying hypothesis on the maintenance of the Bykov cycle is inspired in the $R$-reversible case, with $\dim Fix(R)=2$, where the mantainance holds \emph{per se}. Preserving the Bykov cycle is the most important consequence from the anti-symmetry; this is why we decided to reach general results without the reversibility hypothesis. The divergence-free character is crucial because it guarantees the stability of elliptic periodic solutions in our three-dimensional context. In higher dimensional case, the divergence freeness is no longer sufficient and we should
consider Hamiltonian vector fields  in order to get the stability of elliptic periodic solutions.

In the present paper, we are able to prove the existence of heteroclinic tangencies between $W^u(\sigma_2)$ and $W^s(\sigma_1)$. Our findings are richer that those of \cite{BessaRocha, BesDu07} in the sense that we can explicitly give the manifolds for which we observe tangency. Note that the works \cite{BessaRocha, BesDu07} proved the existence of a $C^1$-dense/residual set inside which tangencies and elliptic solutions occur via a  mechanism developed by Mora and Romero \cite{MR} to create open sets having a dense set of maps with tangencies nearby.

\subsection{The chirality}
Different chirality of the nodes is a new topological concept and has never been studied in the conservative setting.  This topological property has profound effects on the dynamics near the Bykov cycle.  Of course, in agreement with Kaloshin's Theorem on the prevalence of Kupka-Smale systems, these tangencies cannot occur for a full Lebesgue measure on $M$.  The density of tangencies near the cycle is markedly different from the results about the Michelson system proved in \cite{Chang, KE, McCord, Michelson} where the tangencies (near the Bykov cycle) are rare. If $\dim Fix(R)=1$, the $R$-reversibility prevents different chirality of the nodes.

The context of the proof of Theorem \ref{Main1} is different from that described in Knobloch \emph{et al} \cite{KLW} using spirals, a work that has been motivated by the Michelson system. Instead  of  restricting  the  flow  to the wall of the cylinder (as we do in Theorem \ref{Main1}), their conclusions, like Bykov's \cite{Bykov}, are supported by the study of spirals intersections. In \cite[Lemmas 6.6 and 6.7]{KLW} it is shown that two logarithmic spirals with a common centre can have at most two tangencies, assuming an irrational ratio of the real part of the complex eigenvalues -- see Figure \ref{spirals}(a). Although these spirals are only the leading order terms of the bifurcation equations, the density of tangencies cannot occur near the cycle, as shown in \cite{LR_proc, LR2015}. Without breaking the fragile connection, assuming the same chirality of the nodes,  tangencies may occur but not near the cycle.

Bykov implicitly assumed that the chiralities of two nodes are different in Formulas (3.4),(3.5) of \cite{Bykov}. Indeed, observe the same sign of the exponent of $e$ in the formulas that represent the evolution of the angular component of the spirals:
$$
\xi_1=d_1 e^{-\varphi_1/\omega_1}(1+\chi_{21}(0, \varphi_1,0))
\quad
\text{and}
\quad
\xi_2=d_2 e^{-\varphi_2/\omega_2}(1+\chi_{22}(0, \varphi_2,0)),
$$
where $\varphi_j$ represents the angular coordinates in $Out(\sigma_1)$ and $In(\sigma_2)$ and the nonreal eignevalues are given by $\alpha_j\pm i\omega_j$, $j\in\{1,2\}$. The same happens in Formula (3.1) of Bykov \cite{Bykov99}. After taking logarithms of both sides, from equality (3.3) on, Bykov neglects the assumption about the chiralities of the nodes, assuming that they are different. Therefore, in the cross sections, the spirals corresponding to the two-dimensional invariant manifolds of the saddle-foci are oriented in the same way. This explains the orientation of the spirals of Figure 2 of \cite{Bykov} in contrast to those depicted in Figure 11 of \cite{KLW}. These spirals are revived in Figure \ref{spirals}. Bykov never comments on the chiralities of the nodes, assuming implicitly that they are different.  

 \begin{figure}\begin{center}
\includegraphics[height=4.5cm]{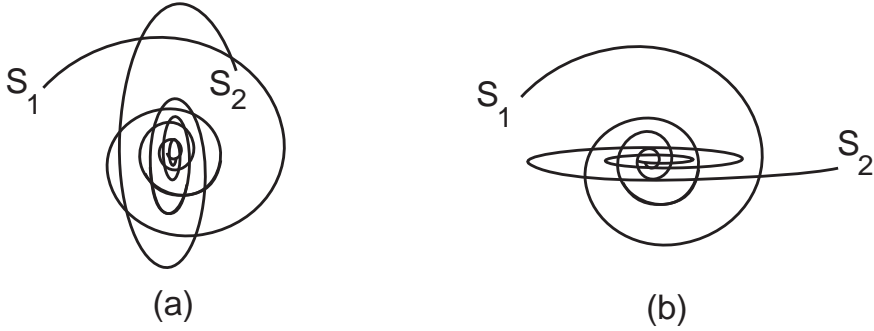}
\end{center}
\caption{\small The spirals $\mathcal{S}_1:=\Phi_2^{-1}(W^s(\sigma_1)\cap Out(\sigma_2))$ and $\mathcal{S}_2:=\Psi_{1,2}\circ \Phi_2(W^u(\sigma_2)\cap In(\sigma_1))$ have: Case (a): an infinite number of transversal intersections and at most two tangential intersections in total \cite{KLW}. Case (b): an infinite number of transversal intersections and, for small perturbations which do not break the fragile connection, an infinite tangential intersections.}
\label{spirals}
\end{figure}

\subsection{The Cocoon bifurcations}
The Cocoon Bifurcation is a set of rich bifurcation phenomena numerically observed by Lau \cite{Lau} in the Michelson system.
The authors in Kokubu \emph{et al} \cite{KWZ} proved the existence of the Cocoon Bifurcations for the Michelson system, which arise as the travelling wave equation of the Kuramoto-Sivashinky equation and also as a part of the limit family of the unfolding of the three-dimensional nilpotent singularity of codimension 3 -- see \cite{DIK1, DIKS}. Numerical results show that, when a given parameter varies, the system exhibits an infinite sequence of heteroclinic bifurcations due to the tangency of the two-dimensional invariant manifolds of saddle-foci. Each bifurcation creates new transverse heteroclinic connections. In \cite{DIK}, these bifurcations have been studied from a general point of view; they have been explained as a consequence of an organizing center called the \emph{cusp transverse heteroclinic chain}. Fixing the fragile connection, the Cocoon Bifurcations play the same role as the Newhouse bifurcations (generic unfoldings for the homoclinic tangency) in the dissipative category. Thick Cantor sets appear near the tangency, which are $C^r$-persistent, $r\geq2$.

In the present paper, we prove the existence of Cocoon bifurcations within a degenerate set of diver\-gence-free vector fields characterized by different chirality of the nodes. In contrast to the findings of \cite{DIK}, in which the authors could break the cycle, our perturbations have been performed without breaking the fragile connection. Furthermore, in \cite{DIK}, the reversibility is used to get saddle-nodes with both stable and unstable sets of dimension 1 and the volume-preserving character plays no role. We may use our results to illustrate the expected behaviour of the simple bifurcations of symmetric periodic solutions, namely saddle-node and period doubling via the creation and annihilation of horseshoes in a similar phenomenon to that described in \cite{BIR, LR2015, YA}. 

\subsection{Generic unfoldings}
Although we deal with a degenerate class of vector fields, our results hold for any generic unfolding of the cycle. The greatest difficulty arises when we want to preserve the Bykov cycle, a property that emerge in $R$-reversible systems with $\dim Fix(R)=2$. In a broader way, our results hold for any kind of cycles having a one-dimensional solution connecting two saddle-foci. In fact, following the approach in \cite{KLW}, it should be possible to show that vector fields containing Bykov cycles are always $C^1$-close to another with a tangency (and even a Cocconing cascade of tangencies), without making any assumption on the chirality. Nevertheless, there is no guarantee that the cycle is preserved. 

Although a degenerate class of systems satisfy \textbf{(H1)--(H5)}, the present work may be seen as the description of a simple and general mechanism to create non-hyperbolic behaviour in a robust fashion near a singular cycle.

\subsection{Final Remark}
Since the existence of cycles involving equilibria is not a generic phenomenon, they may exist outside the residual sets given by the classical dichotomies for divergence-free vector fields \cite{Bessa_ETDS, BessaRocha, BesDu07}. 
Several questions remain to be solved for this kind of cycles, in particular about the existence and sign of the Lyapunov exponents. The Lebesgue measure of the points that remain for all time in the neighbourhood of the cycle is an interesting point to be considered. In the spirit of \cite{LR, Rodrigues2}, conservativeness-breaking is a another direction that should be investigated. A first attempt to understand this phenomenon may be found in Dumortier \emph{et al} \cite[Section 4]{DIKS}. 

The present article should be seen as a starting point for further related studies; the ergodic characterization of the dynamics near these cycles (even in the dissipative case) is still far for being completely understood. 

\bigbreak
We are very grateful to the anonymous referee for the suggestions which have improved the article.


\end{document}